\documentclass[11pt]{amsart}
\usepackage{amsmath,amsfonts,amsthm,mathrsfs,amssymb,amscd,comment,enumerate,amsxtra,url,tikz-cd}
\input{xypic}
\xyoption{all}

\usepackage{xcolor} % Required for specifying customized colours 
\colorlet{mdtRed}{red!50!black}
\definecolor{dblue}{rgb}{0,0,.6}
\usepackage[colorlinks]{hyperref}
\hypersetup{linkcolor=blue,citecolor=dblue,filecolor=dullmagenta,urlcolor=mdtRed}

\numberwithin{equation}{section}
\newtheorem{theorem}[equation]{Theorem}
\newtheorem{corollary}[equation]{Corollary}
\newtheorem{lemma}[equation]{Lemma}
\newtheorem{proposition}[equation]{Proposition}
\newtheorem{definition}[equation]{Definition}

\newtheorem*{theorem*}{Theorem}
\newtheorem*{corollary*}{Corollary}
\newtheorem*{proposition*}{Proposition}

\theoremstyle{remark}
\newtheorem{remark}[equation]{Remark}

\def\subsection{
	\refstepcounter{equation}
	\noindent {\bf \arabic{section}.\arabic{equation}.}
}

\newcommand{\R}{\mathbb{R}}

\newcommand{\mf}[1]{\mathfrak{#1}}

\newcommand{\mb}[1]{\mathbb{#1}}
\newcommand{\mc}[1]{\mathcal{#1}}

\newcommand{\ve}{\varepsilon}

\usepackage{marginnote}
\usetikzlibrary{calc}

\begin{document}

\title[Seshadri constants on some Quot schemes]{Seshadri constants on some Quot schemes}

\author[C. Gangopadhyay]{Chandranandan Gangopadhyay}

\address{Department of Mathematics, Indian Institute of Technology Bombay, Powai,
Mumbai 400076, Maharashtra, India.}

\email{chandra@math.iitb.ac.in}

\author[K. Hanumanthu]{Krishna Hanumanthu}

\address{Chennai Mathematical Institute, H1 SIPCOT IT Park, Siruseri, Kelambakkam 603103, India}

\email{krishna@cmi.ac.in}

\author[R. Sebastian]{Ronnie Sebastian}

\address{Department of Mathematics, Indian Institute of Technology Bombay, Powai,
Mumbai 400076, Maharashtra, India.}

\email{ronnie@math.iitb.ac.in}

\subjclass[2010]{14C20}

\keywords{Seshadri constants, Quot schemes, projective bundles, Grassmann bundles}

\thanks{The second author was partially supported by a grant from Infosys
Foundation and DST SERB MATRICS grant MTR/2017/000243.}

\date{\today}

\begin{abstract}
Let $E$ be a vector bundle of rank $n$ on $\mathbb{P}^1$. 
Fix a positive integer $d$.  Let $\mc Q(E,d)$ denote the Quot scheme
of torsion quotients of $E$ of degree $d$ and let $Gr(E,d)$ denote 
the Grassmann bundle that parametrizes the $d$-dimensional quotients of the 
fibers of $E$. 
We compute Seshadri constants of ample line bundles 
on $\mc Q(E,d)$ and $Gr(E,d)$. 
\end{abstract}
\maketitle

\section{Introduction}

Let $X$ be a projective variety over an algebraically closed field $k$ 
and let $L$ be a nef line bundle on $X$. For a point $x \in X$, 
the \textit{Seshadri constant of  $L$ at $x$} is defined as
$$\varepsilon(X,L,x)\,:=\, \inf\limits_{\substack{x \in C}} \frac{L\cdot
C}{{\rm mult}_{x}C}\, ,$$
where the infimum is taken over all curves in $X$ passing through $x$. Here 
$L\cdot C$ denotes the intersection multiplicity and ${\rm mult}_x C$ denotes the 
multiplicity of $C$ at $x$. When there is no confusion about 
$X$, to simplify notation we denote $\ve(X,L,x)$ by $\ve(L,x)$.

Seshadri constants were introduced by Demailly \cite{D90} 
as a way to tackle the Fujita Conjecture. He was motivated by 
an ampleness criterion of Seshadri \cite[Theorem I.7.1]{H70}. 
These constants turned out to be important invariants associated
to projective varieties. 
As an illustration, let $X$ be a Fano variety of 
dimension $N$ and let $K_X$ denote the canonical bundle on $X$. 
Then $X$ is isomorphic to $\mathbb{P}^N$ if and 
only if there exists a smooth point $x \in X$ such that 
%\marginpar{every smooth point?}
$\varepsilon(X,-K_X,x) > N$; see 
\cite{BS09,LZ18}. A lot of research is currently 
focused on studying Seshadri constants. 

%If the dimension of $X$ is $n$ and $L$ is an ample 
%line bundle on $X$ then we always have the following 
%inequalities for any point 
%$x \in X$: 
%$ 0 < \varepsilon(X,L,x) \le \sqrt[n]{L^n}.$

Seshadri constants on surfaces have been the primary 
focus of researchers, but some results are known in general {only}
for special classes of varieties. For example, several 
results are known in the case of Fano varieties \cite{L03}, 
abelian varieties \cite{Laz96,N96,B98} or toric varieties \cite{I14}.

In this paper, we compute Seshadri constants for ample line 
bundles on Quot schemes over $C: = \mathbb{P}^1$.  
Let $E$ be a vector bundle of rank $n$ 
on $C$ and let $d$ be a positive integer. 
Let 
$\mc Q(E,d)$ denote the Quot scheme of 
torsion quotients of $E$ of degree $d$. 
The Nef cone 
of $\mc Q(E,d)$ is described in \cite{GS20}. 
In \cite{Str} the author restricts his attention 
to the case when $E$ is the trivial bundle, but considers 
more general Quot schemes (not just torsion quotients). 
We use this description to compute the Seshadri 
constants of ample line 
bundles on $\mc Q(E,d)$. 

When $E$ is the trivial bundle of rank $n$ we 
will also denote $\mc Q(E,d)$ by $\mc Q$ or $\mc Q(n,d)$.
In Section \ref{sect1}, we consider the Quot 
scheme $\mc Q(n,d)$ where $d>1$. 
The Nef cone is generated by {naturally defined} line bundles 
$[\mc L_{d-1}]$ and $[\Phi^*\mc O_{\mb P^d}(1)]$ 
(see discussion preceding \eqref{description-nefcone-trivial} 
for notation and more details).
The main result of Section \ref{sect1} is the following (see 
Definition \ref{def-Z} for the definition of 
the closed subset $\mc Z$ and the open set $\mc U=\mc Q\setminus \mc Z$).
\begin{theorem}
	Fix integers $a,b>0$.
	If $x\in \mc U\subset \mc Q$ we have 
	$$\varepsilon(a\cdot [\mc L_{d-1}]
	+b\cdot [\Phi^*\mc O_{\mb P^d}(1)],x)=a.$$ 
	If $x\in \mc Z=\mc Q\setminus \mc U$ we have
	$$\varepsilon(a\cdot [\mc L_{d-1}]
	+b\cdot [\Phi^*\mc O_{\mb P^d}(1)],x)= {\rm min}\{a,b\}.$$
\end{theorem}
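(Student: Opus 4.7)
The plan is to prove matching upper and lower bounds on $\varepsilon(a[\mc L_{d-1}] + b[\Phi^*\mc O_{\mathbb{P}^d}(1)],x)$. For any irreducible curve $C \ni x$ with $m = {\rm mult}_x C$, set $\alpha = \mc L_{d-1}\cdot C$ and $\beta = \Phi^*\mc O_{\mathbb{P}^d}(1)\cdot C$; both are nonnegative since the two classes are nef by \eqref{description-nefcone-trivial}. Writing $u = \alpha/m$, $v = \beta/m$, the assertion $\varepsilon = \min\{a,b\}$ on $\mc Z$ for all $a,b > 0$ is equivalent to the bound $u+v \geq 1$ for every such $C$ with equality approached, while the sharper $\varepsilon = a$ on $\mc U$ is equivalent to the stronger $u \geq 1$ with equality approached.

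For the upper bound I would produce, at every $x \in \mc Q$, a smooth rational curve $C_F$ through $x$ lying in a fiber of $\Phi$ and satisfying $\mc L_{d-1}\cdot C_F = 1$. When $\Phi(x)$ is a reduced divisor, the fiber of $\Phi$ is a product $(\mathbb{P}^{n-1})^d$ and a line in one factor supplies $C_F$; for more degenerate support, a similar line arises from the local product structure of the relevant stratum. Thus $C_F$ realizes $(u,v) = (1,0)$, yielding $\varepsilon \leq a$ everywhere on $\mc Q$. For $x \in \mc Z$ I would additionally exhibit a smooth rational curve $C_S \ni x$ with $\Phi^*\mc O_{\mathbb{P}^d}(1)\cdot C_S = 1$ and $\mc L_{d-1}\cdot C_S = 0$, realizing $(u,v) = (0,1)$ and hence the upper bound $b$. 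The locus $\mc Z$ of Definition \ref{def-Z} should be tailored so that such a ``horizontal'' curve through $x$ exists precisely when $x \in \mc Z$.

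The lower bound is the main obstacle. For any curve with $\beta > 0$, the image $\Phi(C) \subset \mathbb{P}^d$ is a curve and the classical inequality $\deg \geq {\rm mult}$ in projective space, together with a multiplicity comparison along $\Phi|_C$, should give $\beta \geq m$ and hence $v \geq 1$. For $\beta = 0$, $C$ is contained in a fiber $F$ of $\Phi$, and one must show $\alpha \geq m$ by analyzing $\mc L_{d-1}|_F$: on the generic fiber $(\mathbb{P}^{n-1})^d$ this restriction is expected to be the polarization $\mc O(1,\ldots,1)$, whose Seshadri constant at every point is well known to equal $1$, with analogous bounds to be checked on special fibers. Together these give $u+v \geq 1$ everywhere on $\mc Q$, establishing $\varepsilon \geq \min\{a,b\}$.

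Upgrading to $\varepsilon \geq a$ on $\mc U$ requires $u \geq 1$ for every curve through $x \in \mc U$. Since $u + v \geq 1$ already holds, this amounts to showing that $\alpha = 0$ is impossible for a curve through $x \in \mc U$, i.e., no horizontal curve of the $C_S$ type meets $\mc U$. The content of the theorem is thus that $\mc U = \mc Q \setminus \mc Z$ is exactly the complement of the union of such horizontal curves, so the final step reduces to matching the explicit Definition \ref{def-Z} with the geometric locus of points lying on some curve with $\alpha = 0$ and $\beta > 0$. Performing this verification on $\mc Q(n,d)$ is where I expect the bulk of the detailed geometric work to lie.
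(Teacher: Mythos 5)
Your outline reproduces the general shape of the argument (upper bounds from two explicit families of rational curves, lower bounds from degree--multiplicity inequalities), but the step that carries the actual content of the theorem is missing, and your reduction for it is logically flawed. To get $\varepsilon\geq a$ at $x\in\mc U$ you need $\mc L_{d-1}\cdot C\geq {\rm mult}_xC$ for \emph{every} curve through $x$; you claim that, given $u+v\geq 1$, it suffices to rule out curves with $\alpha=0$, but that is a non sequitur: a curve through $x$ with $\alpha=1$, $\beta=2$, ${\rm mult}_xC=2$ is compatible with everything you have established ($\alpha\neq 0$, $u+v\geq 1$, $\beta\geq m$), yet its ratio $(a+2b)/2$ is smaller than $a$ whenever $a>2b$. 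What is actually needed, and what the paper supplies, is that $\mc L_{d-1}$ is globally generated (equation \eqref{e-2}, via $\mc L_{d-1}\cong\det p_{\mc Q*}(\mc B\otimes p_C^*\mc O_C(d-1))$) and that the induced morphism $\Psi$ is \emph{injective on} $\mc U$ (Lemma \ref{lemma-an injective map from an open set}, proved by showing the kernel of a quotient in $\mc U$ becomes globally generated after twisting by $\mc O_C(d-1)$). Injectivity forces $\Psi$ to be non-constant on any curve meeting $\mc U$, and then B\'ezout with a hyperplane section of $\mc L_{d-1}$ through $x$ gives $\alpha\geq m$ directly. Your proposal defers exactly this verification (``matching Definition \ref{def-Z} with the locus of horizontal curves''), and even if you ruled out $\alpha=0$ you would still need global generation of $\mc L_{d-1}$ plus B\'ezout to pass from $\alpha>0$ to $\alpha\geq m$; neither is set up in your plan.

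The remaining ingredients are also asserted rather than proved, and one of them is harder than it looks. For the curve $C_F$ your product description of the fiber of $\Phi$ is only valid over reduced divisors; over non-reduced cycles the fibers are punctual Quot schemes with no evident product structure, whereas Lemma \ref{lemma-L_{1,x}} constructs the curve uniformly at every $x$ by fixing a degree $d-1$ quotient $B\to B'$ and varying the length-one piece in $\mb P(A'_c)$, then verifying $f^*\mc O_{\mc Q}(1)=\mc O_Z(1)$ and that $\Phi\circ f$ is constant. For the curve $C_S$ you say $\mc Z$ ``should be tailored'' so that it exists, but $\mc Z$ is already fixed by Definition \ref{def-Z}; the needed construction (Lemma \ref{lemma-L_{2,x}}) is that for $x\in\mc Z$ the quotient factors as $\mc O^n_C\to\mc O_C\to B$, producing a section $\eta_v\colon\mb P^d\to\mc Q$ of $\Phi$ on which $\mc L_{d-1}$ is trivial, and $C_S$ is a line in its image. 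Finally, your lower bound for curves contained in fibers of $\Phi$, via Seshadri constants of $\mc L_{d-1}$ restricted to each fiber, is exactly the hard case you leave ``to be checked'': on the singular special fibers such a computation is not routine, and the paper avoids it altogether by noting that ampleness of $[\mc L_{d-1}]+[\Phi^*\mc O_{\mb P^d}(1)]$ forces at least one of $\Psi$, $\Phi$ to be non-constant on $C$, after which B\'ezout applied to whichever is non-constant already yields the bound $\min\{a,b\}$.
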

In Section 
\ref{sect2}, we deal with the case $\mc Q':=\mc Q(E,d)$ where
$E$ is an arbitrary vector 
bundle and $d > 1$. 
The Nef cone is generated by line bundles 
$[\mc L_{L,\mc Q'}]$ and $[\Phi'^*\mc O_{\mb P^d}(1)]$, 
(see discussion preceding \eqref{description-nef-general-bundle} for notation and 
more details).
There is a natural inclusion $j:\mc Q'\hookrightarrow \mc Q$.
The main result of Section \ref{sect2} is the following. 

\begin{theorem}
	Assume $d>1$.
	Fix integers $a,b>0$. If $x\in \mc Q'$ such that $j(x)\in \mc U$, then we have 
	$$\varepsilon(a\cdot [\mc L_{L,\mc Q'}]
	+b\cdot [\Phi'^*\mc O_{\mb P^d}(1)],x)=a.$$ 
	If $x\in \mc Q'\setminus \mc U$ we have
	$$\varepsilon(a\cdot [\mc L_{L,\mc Q'}]
	+b\cdot [\Phi'^*\mc O_{\mb P^d}(1)],x)= {\rm min}\{a,b\}.$$
\end{theorem}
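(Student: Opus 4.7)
The plan is to reduce to Theorem 1 via the closed embedding $j:\mc Q'\hookrightarrow \mc Q$. Concretely, on $\mathbb{P}^1$ one fixes an embedding $E\hookrightarrow\mc O^{\oplus n}$ with torsion cokernel (possible after twisting, since every bundle on $\mathbb{P}^1$ is a direct sum of line bundles); then sending a quotient $E\twoheadrightarrow T$ with kernel $F$ to $\mc O^{\oplus n}\twoheadrightarrow\mc O^{\oplus n}/F$ realises $\mc Q'=\mc Q(E,d)$ as a closed subscheme of $\mc Q(n,d+d_0)$ with $d_0=-\deg E$. Functoriality of the Quot construction together with the fact that both $\Phi$ and $\Phi'$ record the support of the torsion quotient should then identify the nef generators under $j^*$; in particular, for $M:=a\cdot[\mc L_{d-1}]+b\cdot[\Phi^*\mc O_{\mathbb{P}^d}(1)]$ one expects $j^*M=a\cdot[\mc L_{L,\mc Q'}]+b\cdot[\Phi'^*\mc O_{\mathbb{P}^d}(1)]$, which is precisely the line bundle whose Seshadri constant is to be computed.

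Given this compatibility, for any curve $C\subset \mc Q'$ through $x$ one has $(j^*M\cdot C)_{\mc Q'}=(M\cdot j(C))_{\mc Q}$ by the projection formula and $\mathrm{mult}_x C=\mathrm{mult}_{j(x)}j(C)$ because $j$ is a closed embedding. Taking infima yields
$$\varepsilon(j^*M,x)\ \geq\ \varepsilon(M,j(x)),$$
and by Theorem 1 the right-hand side equals $a$ when $j(x)\in\mc U$ and $\min\{a,b\}$ when $j(x)\in\mc Z$. This supplies the desired lower bound in both cases.

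For the matching upper bound I would revisit the extremal test curves constructed in Section \ref{sect1}. The ratio $a$ is realised there by curves obtained by moving the support of a torsion quotient along $\mathbb{P}^1$, while at a point of $\mc Z$ the ratio $\min\{a,b\}$ is realised by a $\mathbb{P}^1$-family lying inside a fibre of $\Phi$; both constructions are local at $\mathrm{Supp}(T)$, where $E$ is trivial, so they should descend to test curves in $\mc Q'$ through $x$ whenever $j(x)$ supplies the extremal curve in $\mc Q$. The principal obstacle I anticipate is making this descent precise: verifying that each extremal test curve through $j(x)$ in Section \ref{sect1} arises as $j(C')$ for some $C'\subset\mc Q'$ (equivalently, that every quotient appearing on the test curve factors through $E\hookrightarrow\mc O^{\oplus n}$), and confirming that the identification $j^*M = M'$ is an exact equality rather than mere proportionality so that the two bounds match on the nose.
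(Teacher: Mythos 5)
There is a genuine gap, and it sits at both ends of your argument. First, the embedding you chose does not satisfy the class identification your lower bound rests on. If you twist $E$ so that $E\subset\mc O_C^{\oplus n}$ with torsion cokernel and send a quotient with kernel $F$ to $\mc O^{\oplus n}\twoheadrightarrow\mc O^{\oplus n}/F$, you land in $\mc Q(n,D)$ with $D=d+d_0$, $d_0=-\deg E$. Along this map the universal quotient restricts as an extension of the constant sheaf $\mc O^{\oplus n}/E$ by $\mc B'$, so the pullback of $\mc O_{\mc Q}(1)$ is $\mc O_{\mc Q'}(1)$ and the pullback of $\Phi^*\mc O_{\mb P^{D}}(1)$ is $\Phi'^*\mc O_{\mb P^d}(1)$; but the nef generator of $\mc Q(n,D)$ is $\mc L_{D-1}=\mc O_{\mc Q}(1)\otimes\Phi^*\mc O_{\mb P^{D}}(D-1)$, whose pullback is $\mc O_{\mc Q'}(1)\otimes\Phi'^*\mc O_{\mb P^d}(d+d_0-1)\cong\mc L_{L,\mc Q'}\otimes\Phi'^*\mc O_{\mb P^d}(d_0)$, not $\mc L_{L,\mc Q'}$. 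Hence $a[\mc L_{L,\mc Q'}]+b[\Phi'^*\mc O_{\mb P^d}(1)]$ is the pullback of $a[\mc L_{D-1}]+(b-ad_0)[\Phi^*\mc O_{\mb P^{D}}(1)]$, which is not ample (not even nef) once $b\le ad_0$, so Theorem \ref{trivial-main} cannot be fed into your inequality $\varepsilon(j^*M,x)\ge\varepsilon(M,j(x))$ exactly when $b$ is small, i.e.\ in the case where the answer $\min\{a,b\}=b$ is at stake. Moreover the subsets $\mc U,\mc Z$ in the statement are defined through the paper's $j$, namely the evaluation embedding induced by $H^0(C,E)\otimes\mc O_C\twoheadrightarrow E$ (after normalizing $E$ to be globally generated), which lands in $\mc Q(n,d)$ with the \emph{same} $d$; your kernel embedding produces a different closed subscheme and a different case division. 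If you instead use the paper's $j$, for which $j^*\mc L_{d-1}\cong\mc L_{L,\mc Q'}$ and $j^*\Phi^*\mc O_{\mb P^d}(1)\cong\Phi'^*\mc O_{\mb P^d}(1)$ (proof of \cite[Theorem 6.2]{GS20}), then your restriction inequality together with Theorem \ref{trivial-main} does give both lower bounds, and is a reasonable repackaging of the B\'ezout arguments the paper runs directly on $\mc Q'$.

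Second, the upper bounds are left unproved, and that is where the real content of the case $d>1$ lies. For the ratio $a$ no descent is needed: the elementary-modification curve of Lemma \ref{lemma-L_{1,x}} is constructed verbatim inside $\mc Q'$ for arbitrary $E$ (Lemma \ref{lemma-L_{1,x}-Q'}). The problem is the ratio $b$ at a point $x\in\mc Q'\setminus j^{-1}(\mc U)$: you need a curve through $x$ with $\mc L_{L,\mc Q'}$-degree $0$ and $\Phi'$-degree $1$. In $\mc Q$ such a curve lies in the image of a section $\eta_v$ of $\Phi$ attached to a factorization of the quotient through a trivial rank-one quotient $\mc O^n\to\mc O_C$, and whether that section can be taken inside $j(\mc Q')$ is precisely the obstacle you name without resolving. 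The paper resolves it with Lemma \ref{lemma-d>1-E}: for $d>1$, any torsion quotient $E\to B$ for which the image of $H^0(C,E)\to H^0(C,B)$ is one-dimensional factors through the trivial quotient $E\to E/E'$ coming from the Harder--Narasimhan filtration; hence $x$ lies in $j'(\mc Z'')$ for the trivial-bundle Quot scheme $\mc Q''=\mc Q(E/E',d)$, and Lemma \ref{lemma-L_{2,x}} applied there yields the curve $L_{2,x}\subset\mc Q'$ of Lemma \ref{lemma-L_{2,x}-Q'}. This factorization lemma is the genuinely new ingredient of the section (and the only place the hypothesis $d>1$ enters); without it the inequality $\varepsilon\le b$, and hence the value $\min\{a,b\}$, is not established.
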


The case $d=1$ corresponds to projective bundles.
This is dealt with in Theorem \ref{main-projective-bundle}. 
Finally, using similar methods we compute Seshadri constants on 
Grassmann bundles over $C$ in Section \ref{sect4}, 
see  Theorem \ref{main-grassmann}.

If $L$ is an ample line bundle on an $N$-dimensional projective variety $X$, we have $0 < \varepsilon(X,L,x) \le \sqrt[N]{L^N}$ for all $x \in X$. 
So we define the following: 
% \marginpar{Maybe we can move this after Theorem 1.2? Since that is where we are using it.}
$$\varepsilon(X,L,1) : = \sup\limits_{x\in X} \varepsilon(L,x), \text{and}$$ 
$$\varepsilon(X,L) \,:=\, \inf\limits_{\substack{x \in X}} \varepsilon(L,x)\, .$$

While $\varepsilon(X,L)$ can be arbitrarily small by an %\marginpar{We don't say anything about these}
example of Miranda \cite[Example 5.2.1]{Laz}, it is conjectured that we 
always have $\varepsilon(X,L,1) \ge 1$ when $k$ is the field of complex numbers; see \cite[Conjecture 5.2.4]{Laz}.  
This is known to be true when $X$ is a surface (see \cite{EL93}), but in higher dimension it is open, in general. 
As a consequence of our main results, we conclude that. %\marginpar{added this as a corollary}
\begin{corollary}
The inequality $\varepsilon(X,L,1) \ge 1$ 
holds for ample line bundles $L$ when $X$ is $\mc Q$ or $\mc Q'$.	
\end{corollary}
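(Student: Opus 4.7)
The plan is to apply the two main theorems of the paper directly, after expressing an ample line bundle in terms of the generators of the nef cone.

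First, since the nef cone of $\mc Q$ (respectively of $\mc Q'$) is generated as a convex cone by the two classes $[\mc L_{d-1}]$ and $[\Phi^*\mc O_{\mb P^d}(1)]$ (respectively by $[\mc L_{L,\mc Q'}]$ and $[\Phi'^*\mc O_{\mb P^d}(1)]$), the Néron--Severi group has rank $2$, and these two line bundle classes form an integral basis. Any ample line bundle $L$ lies in the interior of the nef cone, so I would write
$$L \,=\, a\cdot [\mc L_{d-1}] + b\cdot [\Phi^*\mc O_{\mb P^d}(1)]$$
(or the primed version on $\mc Q'$) with $a,b$ integers that are strictly positive, hence $a,b \ge 1$.

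Second, I would select a point $x$ at which the Seshadri constant is already computed to equal $a$. In the $\mc Q$ case, I pick any $x\in \mc U$; this set is nonempty because $\mc Z$ is a proper closed subset (see Definition \ref{def-Z}). In the $\mc Q'$ case, I pick any $x \in j^{-1}(\mc U)$, which requires knowing that $j(\mc Q')$ is not contained in $\mc Z$. For such $x$, Theorem~1 (resp.\ Theorem~2 of the paper) gives $\varepsilon(L,x) = a \ge 1$. Taking supremum yields
$$\varepsilon(X,L,1) \,=\, \sup_{x\in X} \varepsilon(L,x) \,\ge\, \varepsilon(L,x) \,\ge\, 1,$$
which is the desired inequality.

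The only non-trivial step is verifying that $j(\mc Q')\not\subset \mc Z$ for an arbitrary bundle $E$, so that $j^{-1}(\mc U)$ is nonempty. I expect this to be a short consequence of the explicit description of $\mc Z$ given in Definition \ref{def-Z}: since $\mc Z$ cuts out a proper closed locus characterised by a specific degeneracy of torsion quotients, and the image of $j$ is a positive-dimensional subvariety parametrising quotients of $E$, a dimension or genericity argument should show generic points of $j(\mc Q')$ avoid $\mc Z$. Apart from this small point, the corollary is essentially a direct readout of the two main theorems combined with the observation that the coefficients $a,b$ of an ample class in the nef basis are positive integers.
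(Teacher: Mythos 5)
Your proof is correct and essentially the paper's own route: write the ample class as $a\cdot[\mc L_{d-1}]+b\cdot[\Phi^*\mc O_{\mb P^d}(1)]$ (resp.\ $a\cdot[\mc L_{L,\mc Q'}]+b\cdot[\Phi'^*\mc O_{\mb P^d}(1)]$) with integers $a,b\ge 1$, since these classes form an integral basis and span the nef cone, and then invoke Theorem \ref{trivial-main} (resp.\ Theorem \ref{general-main}), which is exactly how Corollaries \ref{trivial-cor} and \ref{general-cor} yield the statement. The one step you leave open, namely $j^{-1}(\mc U)\neq\emptyset$, is not actually needed for this corollary: at \emph{every} point, including points of $\mc Z$ or $\mc Q'\setminus j^{-1}(\mc U)$, the theorems give $\varepsilon(L,x)=a$ or ${\rm min}\{a,b\}$, and both values are $\ge 1$, so the bound (indeed even $\varepsilon(X,L)\ge 1$) follows without any nonemptiness argument; and if you do want that fact, it is immediate from the corollary following Lemma \ref{lemma-d>1-E}, which identifies $\mc Q'\setminus j^{-1}(\mc U)$ with the proper closed subset $j'(\mc Z'')$.
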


\section{Seshadri constants on $\mc Q(n,d)$}\label{sect1}
Throughout we assume the base field to be an algebraically closed field $k$. 
In this section, we consider the Quot scheme associated to the trivial bundle. The results 
proved in this section will be used when we consider the general case in Section \ref{sect2}. 

Let $C:=\mb P^1$. Fix integers $n, d \geq 1$. Let 
$\mc Q:=\mc Q(n,d)$ be the Quot scheme of torsion quotients 
of the vector bundle $\mc O^n_C$ of degree $d$.
{It is well known that $\mc Q$ is a smooth projective variety of dimension $n\cdot d$. 
Let $ \mc O^n_{C\times \mc Q}\to \mc B\to 0$ be the 
universal quotient over $C\times \mc Q$. This quotient has the following universal property: 
Let $T$ be a finite type scheme over $k$. 
Suppose we have a quotient 
$\mc O^n_{C\times T}\to B_T\to 0$ over $C\times T$ 
such that 
$B_T$ is flat over $T$ and 
for all closed points $t\in T$, 
the sheaf $B_T|_{C\times t}$ is a torsion sheaf of degree $d$. 
Then there exists a unique map $f_T:T\to \mc Q$ such that 
$$(id_C\times f_T)^*[\mc O^n_{C\times \mc Q}\to \mc B]=\mc O^n_{C\times T}\to B_T\,.$$}

In this section we compute Seshadri constants of ample line bundles 
on $\mc Q$. 
We begin by describing the Nef cone of $\mc Q$. 

Let $p_C:C\times \mc Q\to C$ and 
$p_{\mc Q}:C\times \mc Q\to \mc Q$ be the projections.   
The sheaf ${p_{\mc Q}}_*(\mc B)$ is a vector bundle over
$\mc Q$ of rank $d$. Define the line bundle 
\begin{equation}
	\mc O_{\mc Q}(1):={\rm det}({p_{\mc Q}}_*(\mc B))\,.
\end{equation}
Let 
\begin{equation}
	\Phi:\mc Q\to S^d\mb P^1\cong \mb P^d
\end{equation}
be 
the Hilbert-Chow map (see \cite[Section 2]{GS19}).
This map has the following pointwise description. 
If $[\mc O_C^n\to B]$ is a torsion quotient of 
degree $d$, then $\Phi$ maps this to the point 
in $S^d\mb P^1$ corresponding to 
$\sum_{x\in {\rm Supp}(B)}l(B_x)[x],$
where $l(B_x)$ denotes the length of the $\mc O_{C,x}$-module $B_x$.

The N\'eron-Severi space $N^1(\mc Q)$ is two-dimensional
and the classes $[\mc O_{\mc Q}(1)]$ and $[\Phi^*\mc O_{\mb P^d}(1)]$
form a basis (see \cite[Corollary 3.10]{GS20}). Define 
\begin{align}
	\mc L_{d-1}&:= \mc O_{\mc Q}(1)\otimes \Phi^*\mc O_{\mb P^d}(d-1)\,.
\end{align}
The nef cone 
${\rm Nef}(\mc Q)\subset N^1(\mc Q)$ of $\mc Q$ is the 
cone generated by the following classes (see \cite[Proposition 6.1]{GS20})
\begin{equation}\label{description-nefcone-trivial}
	{\rm Nef}(\mc Q)=\R_{\geq 0} [\mc L_{d-1}]+
		\R_{\geq 0}[\Phi^*\mc O_{\mb P^d}(1)]\,.
\end{equation}
For a line bundle $L$ on $C$ we denote
\begin{align}
	\mc L_{L,\mc Q}&:={\rm det}(p_{\mc Q*}(\mc B\otimes  p_C^*L))\,.
\end{align}

\begin{lemma}\label{lemma-L_{1,x}}
	Given any point $x\in \mc Q$ there exists a curve 
	$L_{1,x}\cong \mb P^1  \hookrightarrow \mc Q$ passing through $x$
	such that 
	$[\mc L_{d-1}]\cdot [L_{1,x}]
	= 1$ 
	and 
	$[\Phi^*\mc O_{\mb P^d}(1)]\cdot [L_{1,x}]=0$.
\end{lemma}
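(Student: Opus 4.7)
The plan is to construct $L_{1,x}$ by deforming the quotient at a single point of the support of $B$ in a $\mb P^1$-family, keeping the divisor $\Phi(x)$ fixed; I implicitly assume $n \geq 2$, since for $n = 1$ the class $[\mc L_{d-1}]$ is trivial and the nef cone degenerates. Write $x = [\mc O_C^n \twoheadrightarrow B]$ and $F := \ker$, so $F \subset \mc O_C^n$ is a rank-$n$ subsheaf of degree $-d$. Pick a point $p$ in the support of $B$, let $m$ denote the local length of $B$ at $p$, and view $F_p \subset \mc O_{C,p}^n$ as a sublattice of colength $m$. Since $B_p = \mc O_{C,p}^n/F_p$ has nonzero socle, I can choose an intermediate sublattice $F''$ with $F_p \subset F'' \subset \mc O_{C,p}^n$ and $F''/F_p$ of length $1$, so that $F''$ has colength $m-1$.

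The family of colength-$1$ sublattices of $F''$ is canonically the projective space $\mb P^{n-1}$ of $1$-dimensional quotients of $F''/\mf m F'' \cong k^n$, and it contains the point $[F_p]$. I take $L_{1,x} \cong \mb P^1$ to be any line in this $\mb P^{n-1}$ through $[F_p]$. Gluing this local family with the unchanged data at the other points of support, the universal property of $\mc Q$ yields a morphism $L_{1,x} \to \mc Q$ whose image contains $x$ and which is a closed immersion.

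By construction, every quotient parametrized by $L_{1,x}$ has the same support-with-multiplicities as $B$, so $\Phi|_{L_{1,x}}$ is constant and $[\Phi^* \mc O_{\mb P^d}(1)] \cdot [L_{1,x}] = 0$. For $[\mc O_{\mc Q}(1)] \cdot [L_{1,x}]$, the pushforward to $L_{1,x}$ of the universal quotient $\mc B$ splits as a direct sum indexed by the points of support; the summands at $p_i \neq p$ are trivial bundles since the local quotient there is constant in the family. At $p$, the two-step filtration of $B_t$ by $F''/F_t$ (length $1$, varying) and $\mc O_{C,p}^n/F''$ (length $m-1$, constant) globalises to a short exact sequence of vector bundles on $L_{1,x}$,
\[
0 \to \mc O_{L_{1,x}}(1) \to p_{L_{1,x} *}(\mc B|_{\{p\}\times L_{1,x}}) \to \mc O_{L_{1,x}}^{\,m-1} \to 0,
\]
with $\mc O_{L_{1,x}}(1)$ arising as the restriction to $L_{1,x}$ of the tautological line bundle on $\mb P^{n-1}$. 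Taking determinants gives $[\mc O_{\mc Q}(1)] \cdot [L_{1,x}] = 1$, whence $[\mc L_{d-1}] \cdot [L_{1,x}] = 1 + (d-1)\cdot 0 = 1$.

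The main obstacle is cleanly realising the local $\mb P^1$-family as a morphism to $\mc Q$ via the universal property, and correctly identifying the tautological line bundle on $\mb P^{n-1}$ (governing the varying length-$1$ quotient) as $\mc O(1)$ rather than $\mc O(-1)$; once these are pinned down, both intersection computations reduce to the standard degree of the tautological line bundle on a line in projective space.
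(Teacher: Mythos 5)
Your construction is essentially the paper's: you perform an elementary modification at one point of the support (your $F_p\subset F''\subset \mc O_{C,p}^n$ is the paper's $A\subset A'\subset \mc O_C^n$ with $A'/A\cong k_c$), let the colength-one sublattice of $F''$ vary over $Z=\mb P^{n-1}$, take a line through the point corresponding to $x$, and compute the two degrees from the exact sequence $0\to \mc O_Z(1)\to p_{Z*}\mc B_Z\to (\text{constant part})\to 0$ together with constancy of the Hilbert--Chow image. Those computations are correct, and your worry about $\mc O(1)$ versus $\mc O(-1)$ resolves itself with your own conventions: the varying length-one piece $F''/F_t$ is exactly the tautological rank-one \emph{quotient} of $(F''/\mf m F'')\otimes \mc O_Z$, hence $\mc O_Z(1)$, matching the paper's equations \eqref{eqn-fibre of S_d}--\eqref{eqn-quotient over S_d}.

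The genuine gap is the sentence asserting that the resulting morphism $\mb P^1\to \mc Q$ ``is a closed immersion.'' The lemma claims an embedded $L_{1,x}\cong\mb P^1$, and this is the one point the paper has to work for: injectivity on closed points is easy (distinct sublattices give distinct quotients), but injectivity alone does not rule out a non-immersive map whose image is a singular rational curve, and your degree computations, which only use nefness of $\mc L_{d-1}$, cannot by themselves force the image to be smooth or the map to be an embedding. The paper closes this by proving that $\mc L_{d-1}\cong \det p_{\mc Q*}(\mc B\otimes p_C^*\mc O_C(d-1))$ is \emph{globally generated} (via the surjection $V\otimes p_C^*\mc O_C\twoheadrightarrow \mc B$ and an $R^1$-vanishing for the kernel twisted by $\mc O_C(d-1)$), so that the composite $Z\to\mc Q\to\mb P^N$ pulls $\mc O_{\mb P^N}(1)$ back to $\mc O_Z(1)$; a base-point-free subsystem of $|\mc O_{\mb P^{n-1}}(1)|$ is complete, so this composite is a linear embedding and hence $f$ is a closed immersion. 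Some such argument (global generation plus the degree-one identity, or a direct tangent-space computation on the Quot scheme) must be supplied; also note that what you list as the ``main obstacle'' (invoking the universal property and fixing the sign convention) is the routine part, while the embedding statement is where the substance lies.
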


\begin{proof}
Let $E=\mc O_C^n$.
Let $x\in \mc Q$ correspond to the quotient $x:E\to B\to 0$.
Fix a quotient $B\to B'\to 0$ where $B'$ is a torsion sheaf of degree
$d-1$. Let $A$ be the kernel of $E\to B\to 0$ and let $A'$ be the kernel of 
the composition $E\to B\to B'\to 0$. Then $A\subset A'\subset E$ and
we have an exact sequence
$$0 \to A'/A \to B \to B'\to 0\,.$$
Hence, $A'/A$ is a torsion sheaf of degree $1$, that is,
$A'/A\cong k_{c}$, where $k_c$ is the skyscraper sheaf with fibre $k$ at a point
$c\in C$. Let $A'_c$ be the fiber of the 
sheaf $A'$ over the point $c\in C$ and 
let $Z:=\mb P(A'_c)$. 
Let $p_1:C\times Z\to C$ and $p_2:C\times Z\to Z$
be the projections. Let $i:c\times Z\hookrightarrow C\times Z$ be the inclusion.
Then we define a quotient on $C\times Z$ as the composition
\begin{equation}\label{eqn-fibre of S_d}    
{p^*_1A'\to i_*i^*p^*_1A'\cong i_*(A'_c\otimes \mc O_Z)\to i_*\mc O_Z(1)\,.}
\end{equation}
Let $A_Z\subset p^*_1A'\subset p_1^*E$ be the kernel of the above composition. 
Let us denote the quotient $p_1^*E/A_Z$ by $B_Z$. 
Then we have an exact sequence on $C\times Z$
\begin{equation}\label{eqn-quotient over S_d}    
0 \to i_*\mc O_Z(1) \to B_Z\to p^*_1B'\to 0\,.
\end{equation}    
Hence $B_Z$ is flat over $Z$ such that 
$B_Z|_{C\times z}$ is a torsion sheaf of degree $d$ for every $z \in Z$. 
Therefore the quotient $p_1^*E\to B_Z\to 0$ defines a map 
$f:Z\to \mc Q$. It is clear that $x\in \mc Q$ is in the image of this 
map. Using \cite[Lemma 3.1 (ii)]{GS20} and 
(\ref{eqn-quotient over S_d}) we get that 
$f^*\mc O_{\mc Q}(1)=\mc O_Z(1)$. Let $z\in Z$. Then the quotient 
corresponding to $f(z)$ sits in the short exact sequence,
$$0\to k_c\to B_Z\vert_z\to B'\to 0\,,$$
obtained by restricting \eqref{eqn-quotient over S_d}
to $C\times z$. This shows that the divisor 
${\rm div}(B_Z\vert_z)$
corresponding to 
$B_Z\vert_z$  is the sum ${\rm div}(B')+c$. 
Thus, $\Phi\circ f$ is constant;  that is, the image of $f$ 
is contained in a fiber of $\Phi$. 

Next we will show that $f$ is a closed immersion. 
Note that $E$ is globally generated. Let 
$V:=H^0(C,E)$. Then we have a surjection 
$V\otimes \mc O_C \to E$. This gives surjective maps
\begin{equation}\label{e-1}
V\otimes p_C^*\mc O_C\twoheadrightarrow p^*_CE\twoheadrightarrow \mc B\,.
\end{equation}
Letting $K$ denote the kernel of the map $V\otimes p_C^*\mc O_C\to \mc B$
and observing that $K\vert_q$ decomposes into a direct sum
of line bundles $\mc O_C(b_i)$ with $-d\leq b_i\leq 0$,
one easily checks that $R^1p_{\mc Q*}(K\otimes p_C^*\mc O(d-1))=0$.
Let $L:=\mc O_C(d-1)$.
Tensoring \eqref{e-1} with $p_C^*L$, applying $p_{\mc Q*}$
and taking exterior power, we see that 
\begin{equation}\label{e-2}
{\rm det}(p_{\mc Q*}(\mc B\otimes  p_C^*L))=\mc L_{L,\mc Q}
\text{ is a globally generated line bundle on $\mc Q$.}
\end{equation}
In \cite[Proposition 6.1]{GS20} it is proved that 
$$\mc L_{L,\mc Q}\cong \mc O_{\mc Q}(1)\otimes  \Phi^*\mc O_{\mb P^d}(d-1)=\mc L_{d-1}\,.$$ 
Since $\Phi\circ f$ is constant, it follows that 
$f^*\mc L_{L,\mc Q}\cong f^*\mc O_{\mc Q}(1)\cong \mc O_Z(1)$.
Consider the composite map $Z\to \mc Q\to \mb P^N$, where 
the second map is given by $\mc L_{L,\mc Q}$. Since the pullback
of $\mc O_{\mb P^N}(1)$ along this map is $\mc O_Z(1)$ it follows
that the composite, and so also $f$, is a closed immersion.

Let $L_{1,x}\subset Z\subset \mc Q$ denote any line passing through $x$.
Then it is clear that $L_{1,x}$ satisfies the assertions of the lemma. 
\end{proof}

Let $\mc O^n_{C\times \mc Q}\to \mc B$ denote the universal quotient 
on $C\times \mc Q$. Pushing this forward we get a map 
$\mc O_{\mc Q}^n\to p_{\mc Q*}\mc B$ of sheaves on $\mc Q$. Let $\mc F$
denote the cokernel. From Grauert's Theorem it is clear that 
for $q\in \mc Q$ the fiber $\mc F\otimes k(q)$ is the cokernel 
of the map $H^0(C,\mc O^n_C)\to H^0(C,\mc B_q)$. Note that the image 
of this map cannot be 0, thus, ${\rm dim}(\mc F\otimes k(q))\leq d-1$
for all points $q\in \mc Q$.
The set 
$$\{q\in \mc Q\,\vert\, {\rm dim}(\mc F\otimes k(q))\geq d-1 \}=
	\{q\in \mc Q\,\vert\, {\rm dim}(\mc F\otimes k(q))= d-1 \}$$
is a closed subset.

\begin{definition}\label{def-Z}
	Define $\mc Z \subset \mc Q$ 
	to be the closed set consisting of points $q$ for which 
	the image of $H^0(C,\mc O^n_C)\to H^0(C,\mc B_q)$
	is $1$-dimensional.
	Define $\mc U:=\mc Q\setminus \mc Z$.
\end{definition}

\begin{lemma}\label{lemma-L_{2,x}}
	Given any point $x\in \mc Z$ there exists a curve 
	$L_{2,x} \cong \mb P^1 \hookrightarrow \mc Q$ passing through $x$ 
	such that 
	$[\mc L_{d-1}]\cdot [L_{2,x}]=0$ and
	$[\Phi^*\mc O_{\mb P^d}(1)]\cdot [L_{2,x}]=1$.
\end{lemma}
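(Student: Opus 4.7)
The plan is to interpret $\mc Z$ concretely and then produce $L_{2,x}$ as a pencil of ``cyclic'' quotients parametrized by a line in $\mb P^d$. The condition $x\in\mc Z$ means the image of $k^n=H^0(C,\mc O_C^n)\to H^0(C,B_x)$ is one-dimensional, so after a change of basis on $k^n$ the surjection $\mc O_C^n\twoheadrightarrow B_x$ factors as $\mc O_C^n\twoheadrightarrow \mc O_C \twoheadrightarrow B_x$. Hence $B_x$ is a cyclic torsion $\mc O_C$-module of degree $d$; since $C=\mb P^1$, this forces $B_x\cong \mc O_D$ for a unique effective divisor $D$ of degree $d$, and $\Phi(x)=[D]\in \mb P^d = |\mc O_C(d)|$.

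Next, I would fix any line $L\cong \mb P^1\subset \mb P^d=|\mc O_C(d)|$ passing through $[D]$, regarded as a pencil $\{D_t\}_{t\in L}$ of degree-$d$ divisors on $C$. Let $\mc D\subset C\times L$ be its incidence divisor, a Cartier divisor of type $(d,1)$ on $\mb P^1\times \mb P^1$, and let $p_L\colon C\times L\to L$ denote the projection. Composing the first-coordinate projection $\mc O_{C\times L}^n\to \mc O_{C\times L}$ with the quotient $\mc O_{C\times L}\to \mc O_{\mc D}$ produces
\[
\mc O_{C\times L}^n\,\twoheadrightarrow\, \mc O_{C\times L}\,\twoheadrightarrow\, \mc O_{\mc D}=:B_L,
\]
which is flat over $L$ with fibers the cyclic torsion sheaves $\mc O_{D_t}$. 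At the point $t_0\in L$ with $D_{t_0}=D$ the fiber recovers the quotient defining $x$, so the universal property of $\mc Q$ yields a morphism $f_2\colon L\to \mc Q$ whose image contains $x$. Since $\Phi\circ f_2$ is tautologically the chosen inclusion $L\hookrightarrow \mb P^d$, it is a closed immersion, forcing $f_2$ itself to be one. Set $L_{2,x}:=f_2(L)\cong \mb P^1$; this already gives $[\Phi^*\mc O_{\mb P^d}(1)]\cdot [L_{2,x}]=\deg\mc O_L(1)=1$.

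To finish, I would compute $f_2^*\mc O_{\mc Q}(1)=\det (p_L)_*\mc O_{\mc D}$ by pushing the short exact sequence
\[
0\to \mc O_{C\times L}(-d,-1)\to \mc O_{C\times L}\to \mc O_{\mc D}\to 0
\]
forward along $p_L$ and using $H^\bullet(\mb P^1,\mc O(-d))$. This yields $0\to \mc O_L\to (p_L)_*\mc O_{\mc D}\to \mc O_L(-1)^{\oplus(d-1)}\to 0$, whence $f_2^*\mc O_{\mc Q}(1)=\mc O_L(1-d)$ and therefore $f_2^*\mc L_{d-1}=\mc O_L(1-d)\otimes \mc O_L(d-1)=\mc O_L$, giving $[\mc L_{d-1}]\cdot[L_{2,x}]=0$ as required.

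The main obstacle is the first step: recognizing that $\mc Z$ is precisely the cyclic locus, and that cyclic quotients on $\mb P^1$ are parametrized by their support divisors so can be deformed in linear pencils inside $\mb P^d$. Once that picture is in hand, the flatness of the incidence and the cohomology computation of $(p_L)_*\mc O_{\mc D}$ are routine.
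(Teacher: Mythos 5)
Your proposal is correct and follows essentially the same route as the paper: the point $x\in\mc Z$ forces the quotient to factor as $\mc O_C^n\to\mc O_C\to B$, and the curve is obtained from the resulting family of cyclic quotients parametrized by (a line in) $\mb P^d$, on which $\mc L_{d-1}$ restricts trivially while $\Phi$ restricts to a linear embedding. The only difference is that the paper invokes the section $\eta_v:\mb P^d\to\mc Q$ of $\Phi$ and the isomorphism $\eta_v^*\mc L_{d-1}\cong\mc O_{\mb P^d}$ from \cite{GS20}, whereas you construct the family over the line explicitly via the incidence divisor and verify $f_2^*\mc O_{\mc Q}(1)\cong\mc O_L(1-d)$ by a direct pushforward computation, which is a perfectly sound self-contained substitute.
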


\begin{proof}
	Let $x\in \mc Z$. Let $0 \neq w\in H^0(C,B)$ be an element in the image of the map  
	$H^0(C,\mc O^n_C)\to H^0(C,B)$. Then the quotient corresponding to $x$ factors as
	$\mc O^n_C\xrightarrow{v} \mc O_C \xrightarrow{w} B$. 
	Associated to the surjection $v:\mc O^n_C\to \mc O_C$ 
	we have a section $\eta_v:\mb P^d\hookrightarrow \mc Q$ of $\Phi$ 
	as in \cite[Equation (3.12)]{GS20}
	which passes through the point $x$ such that 
	$$\eta_v^*\mc L_{d-1}=\eta_v^*(\mc O_{\mc Q}(1)\otimes \Phi^*\mc O_{\mb P^d}(d-1))\cong \mc O_{\mb P^d}\,.$$
	This is explained in the second paragraph of the proof in \cite[Proposition 6.1]{GS20}.
	Now choose any line in $\mb P^d$ such that its image under $\eta_v$, call it
	$L_{2,x}$, passes through $x$.  
	Then it is clear that $L_{2,x}$ has the required properties.        
\end{proof}

Recall, as remarked after equation \eqref{e-1}, that the natural map 
$$H^0(C,\mc O_{C}(d-1)) \otimes \mc O^n_{\mc Q}
=p_{\mc Q*}(\mc O^n_{C\times \mc Q}\otimes p_C^*\mc O_{C}(d-1))
\to p_{\mc Q*}(\mc B\otimes  p_C^*\mc O_{C}(d-1))$$ 
is surjective. 
Let ${\rm Gr}(H^0(C,\mc O_{C}(d-1))^n,d)$
denote the Grassmannian of $d$-dimensional quotients 
of the vector space $H^0(C,\mc O_{C}(d-1))^n$.
Hence we have morphisms %\marginpar{I think the Grassmannian appears for the first time here. It will be nice to mention what it is before this paragraph.{\color{red}Yes.}}
$$\mc Q\to {\rm Gr}(H^0(C,\mc O_{C}(d-1))^n,d)
\hookrightarrow \mb P^N,$$
where the second map is the Pl\"ucker embedding.
We will denote the first map by $\psi$ and the composition by $\Psi$. Note that, as remarked earlier,
we have an isomorphism
$\Psi^*\mc O_{\mb P^N}(1)\cong \mc L_{d-1}$. 

\begin{lemma}\label{lemma-an injective map from an open set}
	The map 
	$\Psi|_{\mc U}:\mc U\to \mb P^N$ is injective.
\end{lemma}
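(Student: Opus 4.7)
The plan is to show that $\psi|_{\mc U}$ is injective, since composing with the Pl\"ucker embedding (which is injective) then yields injectivity of $\Psi|_{\mc U}$. The idea is to reconstruct the quotient $\mc O_C^n \to B$ representing a point $x \in \mc U$ from the codimension-$d$ subspace $K \subset W := H^0(C,\mc O_C(d-1))^n$ that defines $\psi(x)$.

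First I would analyze the kernel $A$ of $\mc O_C^n \to B$. Since $C=\mb P^1$, we may write $A \cong \bigoplus_{j=1}^n \mc O_C(a_j)$ with $a_j \leq 0$ and $\sum_j a_j = -d$. The long exact sequence associated to $0 \to A \to \mc O_C^n \to B \to 0$ shows that the image of $H^0(\mc O_C^n) \to H^0(B)$ has dimension $n - h^0(A)$, which equals the number of indices $j$ with $a_j < 0$. Hence $x \in \mc U$ forces at least two of the $a_j$ to be strictly negative; but since the $a_j$ are nonpositive and sum to $-d$, this rules out any $a_j=-d$. Therefore every $a_j \geq 1-d$, so $A(d-1)$ is a direct sum of line bundles of nonnegative degree on $\mb P^1$, in particular globally generated, and also $H^1(A(d-1))=0$. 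Twisting $0 \to A \to \mc O_C^n \to B \to 0$ by $\mc O_C(d-1)$ and taking cohomology then gives $0 \to H^0(A(d-1)) \to W \to H^0(B\otimes \mc O_C(d-1)) \to 0$, so $K = H^0(A(d-1)) \subset W$.

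The crux is to recover $A \subset \mc O_C^n$ from $K \subset W$. I would consider the restriction to $K \otimes \mc O_C$ of the tautological evaluation map $W \otimes \mc O_C \to \mc O_C(d-1)^n$. This restriction factors through the subsheaf $A(d-1) \hookrightarrow \mc O_C(d-1)^n$, and its image is \emph{all} of $A(d-1)$ precisely because $A(d-1)$ is globally generated on $\mc U$. Untwisting by $\mc O_C(-(d-1))$ recovers $A$ as a subsheaf of $\mc O_C^n$, and hence the quotient $\mc O_C^n \to B$ up to isomorphism of $B$, which is exactly the data defining a point of $\mc Q$. So two points of $\mc U$ with the same image under $\psi$ must coincide. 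The main obstacle, and the only non-formal content, is the numerical step that forces $A(d-1)$ to be globally generated precisely on $\mc U$; outside $\mc U$ the summand $\mc O_C(-d)$ of $A$ gives a summand $\mc O_C(-1)$ of $A(d-1)$ which fails to be globally generated, and the reconstruction argument breaks down.
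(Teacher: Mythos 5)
Your proof is correct and follows essentially the same route as the paper: reduce to injectivity of $\psi|_{\mc U}$, show that the kernel $A$ twisted by $\mc O_C(d-1)$ is globally generated when $x\in\mc U$, and then reconstruct $A\subset\mc O_C^n$ (hence the quotient) from the subspace $H^0(C,A\otimes\mc O_C(d-1))\subset H^0(C,\mc O_C(d-1))^n$. The only cosmetic difference is the global-generation step: you rule out a summand of degree $\le -d$ by counting the (at least two) negative summands against $\deg A=-d$, while the paper derives the same contradiction from the cohomological estimate $h^1(C,A)\le d-2$ forced by $x\in\mc U$; the two numerical arguments are equivalent.
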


\begin{proof}
	It is enough to show that the map 
	$$\psi|_{\mc U}:\mc U \to {\rm Gr}(H^0(C,\mc O_{C}(d-1))^n,d)$$
	is injective. By definition, if $x\in \mc U$ corresponds to the quotient 
	$x:\mc O^n_C\to B\to 0$ then the image of $x$ under the map $\psi$ is the quotient
	of vector spaces 
	$$H^0(C,\mc O_{C}(d-1))^n\to H^0(B\otimes \mc O_{C}(d-1))\to 0\,.$$
	Let $A=\bigoplus\limits^n_{i=1} \mc O_C(d_i)$ be the kernel of $x$. 
	We will show that $A\otimes \mc O_{C}(d-1)$ is globally generated, 
	which is equivalent to showing $d_i\geq -(d-1)$.  Let us assume the 
	contrary, that is, 
	there exists $d_i$ such that $d_i\leq -d$. Then 
	$$h^1(C,A)\geq h^1(C, \mc O_{C}(d_i))=h^0(C,\mc O_{C}(-2-d_i))
	\geq h^0(C,\mc O_{C}(-2+d))=d-1\,.$$
	Now consider the exact sequence 
	$$H^0(C,\mc O^n_C)\to H^0(C,B) \to H^1(C,A) \to 0\,.$$
	Since $x\in \mc U$ we have that the image of the first map has dimension $\geq 2$.
	Therefore, $h^1(C,A)\leq d-2$ and we arrive at a contradiction. Hence 
	$A\otimes \mc O_{C}(d-1)$ is globally generated. This means that
	$A$ as a subsheaf of $\mc O^n_C$ (and so also the quotient $x$) 
	can be recovered from the map 
	$H^0(C,A\otimes \mc O_{C}(d-1))\to H^0(C,\mc O_{C}(d-1))^n$
	by taking the sheaf generated by the sections 
	$H^0(C,A\otimes \mc O_{C}(d-1))$ in $\mc O_{C}(d-1)^n$ and then twisting by 
	$\mc O_C(-d+1)$. Thus, we get that the required map is injective. 
\end{proof}

\begin{theorem}\label{trivial-main}
	Fix integers $a,b>0$.
	If $x\in \mc U\subset \mc Q$ we have 
	$$\varepsilon(a\cdot [\mc L_{d-1}]
	+b\cdot [\Phi^*\mc O_{\mb P^d}(1)],x)=a.$$ 
	If $x\in \mc Z=\mc Q\setminus \mc U$ we have
	$$\varepsilon(a\cdot [\mc L_{d-1}]
	+b\cdot [\Phi^*\mc O_{\mb P^d}(1)],x)= {\rm min}\{a,b\}.$$
\end{theorem}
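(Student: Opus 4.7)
The approach combines upper bounds from test-curve intersections with lower bounds obtained by viewing $a[\mc L_{d-1}]+b[\Phi^*\mc O_{\mb P^d}(1)]$ as the pullback of an ample class along a morphism that is finite at $x$. The upper bounds come directly from the curves constructed in Lemmas \ref{lemma-L_{1,x}} and \ref{lemma-L_{2,x}}: since $L_{1,x}\cong\mb P^1$ is smooth at $x$ with ${\rm mult}_xL_{1,x}=1$, Lemma \ref{lemma-L_{1,x}} gives $\varepsilon\le a$ at every $x\in\mc Q$; for $x\in\mc Z$ the curve $L_{2,x}$ from Lemma \ref{lemma-L_{2,x}} gives the additional bound $\varepsilon\le b$, hence $\varepsilon\le\min\{a,b\}$ on $\mc Z$.

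For the lower bound at $x\in\mc U$, the plan is to first establish $\varepsilon(\mc L_{d-1},x)\ge 1$ and then deduce $\varepsilon\ge a$ via the standard inequality $\varepsilon(L_1+L_2,x)\ge\varepsilon(L_1,x)$ for nef $L_2$, applied to $L_2=b\cdot [\Phi^*\mc O_{\mb P^d}(1)]$. To obtain $\varepsilon(\mc L_{d-1},x)\ge 1$, I would invoke Lemma \ref{lemma-an injective map from an open set}: set-theoretic injectivity of $\Psi:\mc Q\to\mb P^N$ on the open set $\mc U$ forces any irreducible curve $C'\ni x$ to meet $\mc U$ in a nonempty open subset, hence in an infinite set, so $C'$ cannot be contracted by $\Psi$. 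Therefore $\Psi$ has $0$-dimensional fibers near $x$ and, being proper, is finite at $x$; the pullback principle for Seshadri constants under a morphism finite at a point then gives $\mc L_{d-1}\cdot C'\ge {\rm mult}_xC'$ for every curve $C'\ni x$.

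For the lower bound at $x\in\mc Z$, I would pass to the product morphism $(\Psi,\Phi):\mc Q\to\mb P^N\times\mb P^d$, whose pullback of $\mc O(a,b)$ is exactly $a[\mc L_{d-1}]+b[\Phi^*\mc O_{\mb P^d}(1)]$. Since $[\mc L_{d-1}]$ and $[\Phi^*\mc O_{\mb P^d}(1)]$ span $N^1(\mc Q)$, no irreducible curve can be simultaneously contracted by $\Psi$ and $\Phi$ (as its class would then be numerically trivial on a projective variety); thus $(\Psi,\Phi)$ has $0$-dimensional fibers everywhere and is finite. A direct check on $\mb P^N\times\mb P^d$ shows $\varepsilon(\mc O(a,b),y)=\min\{a,b\}$ at every $y$: the upper bound comes from intersecting with lines in the two rulings, and the lower bound from the decomposition $\mc O(a,b)\cong\min\{a,b\}\cdot\mc O(1,1)+(\textnormal{nef class})$ together with very ampleness of $\mc O(1,1)$. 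Applying the pullback principle to $(\Psi,\Phi)$ then yields $\varepsilon\ge\min\{a,b\}$ at every $x\in\mc Q$, matching the upper bound on $\mc Z$.

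The main obstacle will be the pullback principle itself: for a morphism $f:X\to Y$ which is merely finite at the point $x$ (rather than globally finite), one needs $\varepsilon(f^*M,x)\ge\varepsilon(M,f(x))$ for any nef $M$ on $Y$. This reduces to the local multiplicity inequality ${\rm mult}_xC'\le\deg(f|_{C'})\cdot {\rm mult}_{f(x)}f(C')$ for an irreducible curve $C'\ni x$ not contracted by $f$, whose delicate case is when $C'$ or its image $f(C')$ is singular at the relevant point and one must argue via normalizations and ramification indices. Once this principle is cleanly available, the argument above completes in the two cases without further complications.
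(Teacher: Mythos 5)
Your proposal is correct, and it rests on the same pillars as the paper's proof (the test curves of Lemmas \ref{lemma-L_{1,x}} and \ref{lemma-L_{2,x}} for the upper bounds, the injectivity of $\Psi|_{\mc U}$ from Lemma \ref{lemma-an injective map from an open set}, and the fact that no curve is contracted by both $\Psi$ and $\Phi$), but the mechanism you use for the lower bounds is genuinely different. The paper argues directly on $\mc Q$: since $\mc L_{d-1}$ (resp.\ $\Phi^*\mc O_{\mb P^d}(1)$) is globally generated and non-constant on the given curve $X$, one picks a member $H$ of the linear system through $x$ with $X\not\subset H$ and applies B\'ezout, $[H]\cdot[X]\ge {\rm mult}_xH\cdot{\rm mult}_xX\ge {\rm mult}_xX$; for $x\in\mc Z$ this is done case by case according to whether $\Psi$ or $\Phi$ is non-constant on $X$. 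You instead transport the problem downstairs: you invoke a pullback principle $\varepsilon(f^*M,x)\ge\varepsilon(M,f(x))$ for morphisms with no contracted curve through $x$, applied to $\Psi$ (using $\varepsilon(\mc O_{\mb P^N}(1))=1$) and to the finite map $(\Psi,\Phi)$ into $\mb P^N\times\mb P^d$ (using $\varepsilon(\mc O(a,b))=\min\{a,b\}$, which you compute correctly). This is a clean packaging — in particular your product-map argument handles the $\mc Z$ case in one stroke and even gives the bound $\ge\min\{a,b\}$ at every point — but it buys this at the cost of the multiplicity inequality ${\rm mult}_xC'\le\deg(f|_{C'})\cdot{\rm mult}_{f(x)}f(C')$, which you flag but do not prove; it is true, and your sketch (normalize, use $f^\sharp\mf m_{f(x)}\subseteq\mf m_x$, sum branch multiplicities weighted by ramification indices, bound by the degree) is the standard argument, so this is the only step you would still need to write out. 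Note also that your "finite at $x$" detour is more than you need: non-contraction of curves through $x$, which you already get from injectivity on $\mc U$ (resp.\ from ampleness of $[\mc L_{d-1}]+[\Phi^*\mc O_{\mb P^d}(1)]$), suffices, and the paper's B\'ezout route shows how to avoid the ramification analysis altogether by intersecting with a divisor through $x$ rather than comparing with the Seshadri constant of the target.
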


\begin{proof}
	Let $x\in \mc U$. 
	Let $X$ be any irreducible and reduced curve in $\mc Q$ passing through $x$.
	Then by Lemma \ref{lemma-an injective map from an open set} the map
	$\Psi(X\cap \mc U)\neq {\rm pt}$. Hence there exists a section
	$H\in H^0(\mc Q,\mc L_{d-1})$ passing through $x$ such that $X$ is not
	contained in $H$. By B\'ezout's theorem, we get     
	$$[\mc L_{d-1}]\cdot [X]
	=[H]\cdot [X]\geq {\rm mult}_xX\,.$$
	Hence    
	$$(a\cdot [\mc L_{d-1}]+b\cdot [\Phi^*\mc O_{\mb P^d}(1)])\cdot [X]
	\geq a\cdot [\mc L_{d-1}] \cdot [X]\geq a\cdot {\rm mult}_xX\,.$$
	Therefore, $\varepsilon(a\cdot [\mc L_{d-1}]
	+b\cdot [\Phi^*\mc O_{\mb P^d}(1)],x)\geq a$.
	Now  by Lemma \ref{lemma-L_{1,x}} we have 
	$$\varepsilon(a\cdot [\mc L_{d-1}]
	+b\cdot [\Phi^*\mc O_{\mb P^d}(1)],x)
	\leq \dfrac{(a\cdot [\mc L_{d-1}]
		+b\cdot [\Phi^*\mc O_{\mb P^d}(1)])\cdot [L_{1,x}]}{{\rm mult}_xL_{1,x}}
	=a\,.$$
	Hence, we get for $x\in \mc U$
	$$\varepsilon(a\cdot [\mc L_{d-1}]
	+b\cdot [\Phi^*\mc O_{\mb P^d}(1)],x)=a\,.$$    
	
	Now let $x\in \mc Z$.
	We first prove the inequality 
	$$\varepsilon(a\cdot [\mc L_{d-1}]
	+b\cdot [\Phi^*\mc O_{\mb P^d}(1)],x)\geq {\rm min}\{a,b\}\,.$$  
	Let $X$ be any irreducible and reduced curve in $\mc Q$ 
	passing through $x$.  
	We have maps $\Psi:\mc Q \to \mb PH^0(\mc Q,\mc L_{d-1})$ 
	and $\Phi:\mc Q\to \mb P^d$.
	The class $[\mc L_{d-1}]+[\Phi^*\mc O_{\mb P^d}(1)]$ is ample,
	thus, it cannot happen that  
	$\Psi$ and $\Phi$ both are constant on $X$. 
	First consider the case when $\Psi$ is non-constant on $X$.
	Hence, there 
	exists a section 
	$H_1\in H^0(\mc Q,\mc L_{d-1} )$ passing 
	through $x$ such that $X$ is not contained in $H_1$. 
	By B\'ezout's theorem, we will have
	$$[\mc L_{d-1}]\cdot [X]
	=[H_1]\cdot [X]\geq {\rm mult}_xX\,,$$
	which gives 
	$$\frac{(a\cdot [\mc L_{d-1}]
		+b\cdot [\Phi^*\mc O_{\mb P^d}(1)])\cdot [X]}{{\rm mult}_xX}
	\geq a\frac{[\mc L_{d-1}]\cdot [X]}{{\rm mult}_xX}\geq a\geq {\rm min}\{a, b\}\,. $$
	Next consider the case when $\Phi$ is non-constant on $X$.
	Then there is a section
	$H_2\in H^0(\mc Q, \Phi^*\mc O_{\mb P^d}(1))$ passing through $x$ such that 
	$X$ is not contained in $H_2$. Again we have by B\'ezout's theorem
	$$[\Phi^*\mc O_{\mb P^d}(1)]\cdot [X]=[H_2]\cdot [X]\geq {\rm mult}_xX\,,$$
	which gives 
	$$\frac{(a\cdot [\mc L_{d-1}]
		+b\cdot [\Phi^*\mc O_{\mb P^d}(1)])\cdot [X]}{{\rm mult}_xX}
	\geq b\frac{[\Phi^*\mc O_{\mb P^d}(1)]\cdot [X]}{{\rm mult}_xX}\geq b\geq {\rm min}\{a,b\}\,. $$
	Therefore we get
	$$\varepsilon(a\cdot [\mc L_{d-1}]
	+b\cdot [\Phi^*\mc O_{\mb P^d}(1)],x)\geq {\rm min}\{a, b\}\,.$$
	Now Lemma \ref{lemma-L_{1,x}} implies that
	$$\varepsilon(a\cdot [\mc L_{d-1}]
	+b\cdot [\Phi^*\mc O_{\mb P^d}(1)],x) \leq \dfrac{(a\cdot [\mc L_{d-1}]
		+b\cdot [\Phi^*\mc O_{\mb P^d}(1)])\cdot [L_{1,x}]}{{\rm mult}_xL_{1,x}}=a\,.$$
	Similarly Lemma \ref{lemma-L_{2,x}} implies that 
	$$\varepsilon(a\cdot [\mc L_{d-1}]
	+b\cdot [\Phi^*\mc O_{\mb P^d}(1)],x)\leq \dfrac{(a\cdot [\mc L_{d-1}]
		+b\cdot [\Phi^*\mc O_{\mb P^d}(1)])\cdot [L_{2,x}]}{{\rm mult}_xL_{2,x}}=b\,.$$
	Therefore, we get that for $x\in \mc Z$ we have 
	$$ \varepsilon(a\cdot [\mc L_{d-1}]
	+b\cdot [\Phi^*\mc O_{\mb P^d}(1)],x)={\rm min}\{a,b\}\,.$$
	This completes the proof of the theorem. 
\end{proof}

From the above theorem, we immediately obtain the following results.

\begin{corollary}\label{trivial-cor}
With the notation as in Theorem \ref{trivial-main}, we have 
\begin{enumerate}
\item $\varepsilon(a\cdot [\mc L_{d-1}]
	+b\cdot [\Phi^*\mc O_{\mb P^d}(1)],1) = a$. 
\item $\varepsilon(a\cdot [\mc L_{d-1}]
	+b\cdot [\Phi^*\mc O_{\mb P^d}(1)]) =  {\rm min}\{a,b\}$.
\end{enumerate}
\end{corollary}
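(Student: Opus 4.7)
The plan is to deduce both statements directly from Theorem~\ref{trivial-main}, which pins down the function $x \mapsto \varepsilon(a[\mc L_{d-1}] + b[\Phi^*\mc O_{\mb{P}^d}(1)], x)$ completely: it takes the constant value $a$ on the open stratum $\mc U$ and the constant value $\min\{a,b\}$ on its closed complement $\mc Z$. Since $\min\{a,b\} \le a$, the supremum of this function over $\mc Q$ equals $a$ (attained on $\mc U$) and the infimum equals $\min\{a,b\}$ (attained on $\mc Z$), provided each stratum is non-empty. This will give (1) and (2) respectively.

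The only thing that needs verification, therefore, is that both $\mc U$ and $\mc Z$ are non-empty (under the tacit assumption $n \geq 2$, without which the nef cone is one-dimensional and the statement collapses). To see $\mc Z \ne \emptyset$, I would fix a point $c \in C$ and take the composite $\mc O_C^n \twoheadrightarrow \mc O_C \twoheadrightarrow \mc O_C/\mf{m}_c^d$, where the first arrow is projection onto any one coordinate; the induced map on global sections factors through $H^0(\mc O_C) = k$, so its image is one-dimensional and this quotient lies in $\mc Z$. To see $\mc U \ne \emptyset$, I would pick $d$ distinct points $c_1,\ldots,c_d \in C$ and generic linear functionals $\lambda_1,\ldots,\lambda_d \in (k^n)^\vee$, and form the quotient $\mc O_C^n \twoheadrightarrow \bigoplus_{i=1}^d k_{c_i}$ whose $i$-th component is prescribed by $\lambda_i$; the induced map $k^n \to k^d$ has rank $\min(n,d) \ge 2$, so this point lies in $\mc U$.

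I do not expect any serious obstacle: once the pointwise values of $\varepsilon$ from Theorem~\ref{trivial-main} are in hand, everything reduces to the elementary fact that the supremum (respectively infimum) of a two-valued function over $\mc Q$, with known constant values $a$ and $\min\{a,b\}$ on the known non-empty pieces $\mc U$ and $\mc Z$, is simply the larger (respectively smaller) of those two values.
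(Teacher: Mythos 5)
Your argument is correct and takes essentially the same route as the paper, whose proof simply observes that the corollary is immediate from Theorem \ref{trivial-main} and the definitions of $\varepsilon(X,L,1)$ and $\varepsilon(X,L)$; your explicit check that both $\mc U$ and $\mc Z$ are non-empty only spells out what the paper leaves implicit. One small point: your construction of a point of $\mc U$ needs $d\ge 2$ as well as $n\ge 2$ (the generic map $k^n\to k^d$ has rank $\min(n,d)$, and for $d=1$ one has $\mc U=\emptyset$), so the tacit hypothesis you invoke should include the section's standing assumption $d>1$.
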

\begin{proof} 
This is immediate from the definitions of $\varepsilon(L,1)$ and $\varepsilon(L)$ for a line bundle $L$. 
\end{proof}

\section{Seshadri constants on $\mc Q(E,d)$ when $d>1$}\label{sect2}
We now consider the case of an arbitrary vector bundle $E$ on $C = \mb P^1$. 
We treat the case $d > 1$ in this section. %\marginpar{\color{red}We only deal with $\mb P^1$.}
In the next section, we deal with the case $d=1$. 

If $E$ is a vector bundle on $C = \mb P^1$, then $E$ is 
a direct sum of line bundles. By tensoring with a suitable line bundle, we may assume that 
$E=\mc O_{C}\oplus \bigoplus\limits^{r-1}_{i=1} \mc O_C(a_i)$ with $0\leq a_i\leq a_j$ for $i<j$.
Let $\mc Q':=\mc Q(E,d)$ be the Quot scheme of torsion quotients 
of $E$ of degree $d$.
First we recall the description of the Nef cone of $\mc Q'$. 
Let $p_C:C\times \mc Q'\to C$ and 
$p_{\mc Q}:C\times \mc Q'\to \mc Q'$ be the projections.  
Let $ p_C^*E\to \mc B'\to 0$ be the 
universal quotient over $C\times \mc Q'$. 
Let $M$ be any line bundle on $C$. 
By \cite[Lemma 3.1]{GS20} 
the sheaf ${p_{\mc Q'}}_*(p_C^*M\otimes \mc B')$ is a vector bundle over
$\mc Q'$ of rank $d$. Define
\begin{equation}
	\mc L_{M,\mc Q'}:={\rm det}({p_{\mc Q'}}_*(p_C^*M\otimes \mc B'))\,.
\end{equation}
The bundle $\mc L_{\mc O,\mc Q'}$ will also
be denoted 
\begin{equation}
	\mc O_{\mc Q'}(1):=\mc L_{\mc O,\mc Q'}={\rm det}({p_{\mc Q'}}_*(\mc B'))\,.
\end{equation}
Let 
\begin{equation}
	L:=\mc O_C(d-1)\,.
\end{equation}
Let 
\begin{equation}
	\Phi':\mc Q'\to S^d\mb P^1\cong \mb P^d
\end{equation}
be the Hilbert-Chow map (see \cite[Section 2]{GS19}).
Then it is proved in \cite[Theorem 6.2]{GS20} that
\begin{align}\label{description-nef-general-bundle}	
	{\rm Nef}(\mc Q') = &  \mb R_{\geq 0}[\mc L_{L,\mc Q'}]+\mb R_{\geq 0}[\Phi'^*\mc O_{\mb P^d}(1)]  \,.                       %\\
	%= &  \mb R_{\geq 0}([\mc O_{\mc Q'(E,d)}(1)]+(-a_1+d-1)[\mc O_{\mb P^d}(1)])+\mb R_{\geq 0}[\mc O_{\mb P^d}(1)]\,.
\end{align}

\begin{lemma}\label{lemma-L_{1,x}-Q'}
	Given any point $x\in \mc Q'$ there exists a curve 
	$L_{1,x}\cong \mb P^1  \hookrightarrow \mc Q'$ passing through $x$
	such that 
	$[\mc L_{L,\mc Q'}]\cdot [L_{1,x}]
	= 1$ 
	and 
	$[\Phi'^*\mc O_{\mb P^d}(1)]\cdot [L_{1,x}]=0$.
\end{lemma}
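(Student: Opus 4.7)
The plan is to follow the construction of Lemma~\ref{lemma-L_{1,x}} almost verbatim, substituting the general vector bundle $E$ for the trivial bundle $\mc O_C^n$; the only step that genuinely changes is the identification of $f^* \mc L_{L,\mc Q'}$. Given $x \in \mc Q'$ corresponding to a quotient $E \to B \to 0$, I fix a further quotient $B \to B' \to 0$ with $B'$ of degree $d-1$, and set $A := \ker(E \to B)$, $A' := \ker(E \to B')$, so that $A \subset A' \subset E$ with $A'/A \cong k_c$ for some $c \in C$. Setting $Z := \mb P(A'_c)$ and defining $B_Z$ on $C \times Z$ by the composition \eqref{eqn-fibre of S_d} as before produces the exact sequence
\[
0 \to i_* \mc O_Z(1) \to B_Z \to p_1^* B' \to 0,
\]
and hence a morphism $f:Z\to \mc Q'$ whose image contains $x$. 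As in the proof of Lemma~\ref{lemma-L_{1,x}}, the support divisor of $B_Z|_{C\times z}$ is $c + \mathrm{div}(B')$ independently of $z$, so $\Phi'\circ f$ is constant.

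The new step is to check $f^* \mc L_{L,\mc Q'} = \mc O_Z(1)$. Tensoring the sequence above with $p_1^* L$, the projection formula combined with $L_c \cong k$ gives $p_1^* L \otimes i_* \mc O_Z(1) \cong i_* \mc O_Z(1)$. Since $L\otimes B'$ is torsion on $C$, one has $R^j p_{Z*}(p_1^*(L \otimes B')) = H^j(C, L\otimes B') \otimes \mc O_Z$, which is $\mc O_Z^{d-1}$ for $j=0$ and vanishes for $j>0$; similarly $R^j p_{Z*}(i_*\mc O_Z(1)) = \mc O_Z(1)$ for $j=0$ and vanishes otherwise. Pushing the sequence forward by $p_{Z*}$ therefore yields
\[
0 \to \mc O_Z(1) \to p_{Z*}(p_1^* L \otimes B_Z) \to \mc O_Z^{d-1} \to 0,
\]
whose determinant is $\mc O_Z(1)$. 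By cohomology and base change (using \cite[Lemma 3.1]{GS20}), this determinant coincides with $f^* \mc L_{L,\mc Q'}$.

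To conclude, I take $L_{1,x}$ to be (the image of) any line $\ell \subset Z$ through the point that maps to $x$. Under the normalization $a_i \geq 0$ made at the start of the section, $L\otimes E$ is globally generated on $C$, so the argument recorded in \eqref{e-2} adapts to show that $\mc L_{L,\mc Q'}$ is globally generated on $\mc Q'$. Composing $f|_\ell$ with the induced morphism $\mc Q' \to \mb P^N$ gives a morphism $\mb P^1 \to \mb P^N$ pulling $\mc O(1)$ back to $\mc O_{\mb P^1}(1)$, which must be a linear embedding; this forces $f|_\ell$, being proper, injective, and unramified, to be a closed immersion. The required intersection numbers $[\mc L_{L,\mc Q'}] \cdot [L_{1,x}] = 1$ and $[\Phi'^* \mc O_{\mb P^d}(1)] \cdot [L_{1,x}] = 0$ then follow at once from $f^* \mc L_{L,\mc Q'} = \mc O_Z(1)$ and constancy of $\Phi'\circ f$. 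The main obstacle is really the determinant computation in the middle paragraph: in Lemma~\ref{lemma-L_{1,x}} it was short-circuited via the identity $\mc L_{L,\mc Q} = \mc O_{\mc Q}(1) \otimes \Phi^* \mc O_{\mb P^d}(d-1)$ together with constancy of $\Phi\circ f$, but no such a priori identification is available for a general $E$, and one must push the defining sequence forward directly.
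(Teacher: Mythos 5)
Your proof is correct, and it follows the paper's construction of $Z$, $f:Z\to\mc Q'$, the constancy of $\Phi'\circ f$, the global generation of $\mc L_{L,\mc Q'}$ (the argument up to \eqref{e-2} indeed goes through verbatim since $E$ is globally generated), and the closed-immersion step. The one place where you genuinely diverge is the identification $f^*\mc L_{L,\mc Q'}\cong\mc O_Z(1)$: you compute it directly by tensoring \eqref{eqn-quotient over S_d} with $p_1^*L$, pushing forward along $p_Z$, and taking determinants of the resulting extension $0\to\mc O_Z(1)\to p_{Z*}(B_Z\otimes p_1^*L)\to\mc O_Z^{d-1}\to 0$, invoking cohomology and base change. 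The paper instead quotes \cite[Theorem 6.2]{GS20}, which proves $\mc L_{L,\mc Q'}\cong\mc O_{\mc Q'}(1)\otimes\Phi'^*\mc O_{\mb P^d}(d-1)$ for an arbitrary $E$, and then uses \cite[Lemma 3.1 (ii)]{GS20} together with the constancy of $\Phi'\circ f$ exactly as in Lemma \ref{lemma-L_{1,x}}; so your closing remark that no such a priori identification is available for general $E$ is mistaken — it is available and is what the paper uses. The trade-off: your route is more self-contained (it only needs base change for $p_{\mc Q'*}(\mc B'\otimes p_C^*L)$, not the computation of $\mc L_{L,\mc Q'}$ in the N\'eron--Severi basis), while the paper's is shorter because that computation is already in \cite{GS20} and is needed elsewhere anyway. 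Both arguments are sound and yield the stated intersection numbers.
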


\begin{proof}
	Replace $\mc Q$ by $\mc Q'$ and proceed exactly as in the proof of 
	Lemma \ref{lemma-L_{1,x}} till equation
	\eqref{e-2}. Thus, we get that
	${\rm det}(p_{\mc Q'*}(\mc B'\otimes  p_C^*L))=\mc L_{L,\mc Q'}$
	is a globally generated line bundle on $\mc Q'$, and
	we have a map $f:Z\to \mc Q'$ such that $\Phi'\circ f$ is constant.

	In \cite[Theorem 6.2]{GS20} it is proved that 
	$$\mc L_{L,\mc Q'}\cong \mc O_{\mc Q'}(1)\otimes  \Phi'^*\mc O_{\mb P^d}(d-1)\,.$$ 
	Since $\Phi'\circ f$ is constant, it follows that 
	$f^*\mc L_{L,\mc Q'}\cong f^*\mc O_{\mc Q'}(1)\cong \mc O_Z(1)$.
	Consider the composite map $Z\to \mc Q'\to \mb P^N$, where 
	the second map is given by $\mc L_{L,\mc Q'}$. Since the pullback
	of $\mc O_{\mb P^N}(1)$ along this map is $\mc O_Z(1)$ it follows
	that the composite, and so also $f$, is a closed immersion.
		
	Let $L_{1,x}\subset Z\subset \mc Q'$ denote any line passing through $x$.
	Then it is clear that $L_{1,x}$ satisfies the assertions of the lemma. 
\end{proof}  

Let $E'$ denote the largest proper sub-bundle of $E$ in the Harder-Narasimhan filtration 
of $E$. Then $E/E'$ is the trivial bundle. 

\begin{lemma}\label{lemma-d>1-E}Assume $d>1$. Let $E \to B$ be a 
	torsion quotient of degree $d$ such that the image of $H^0(C,E)$ 
	in $H^0(C,B)$ is one-dimensional.  
	Then this quotient factors through $E\to E/E'$. \end{lemma}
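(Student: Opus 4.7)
The plan is to argue by contradiction. Suppose the quotient $E\to B$ does \emph{not} factor through $E\to E/E'$, so that the composite $E'\hookrightarrow E\to B$ is nonzero. Then some line bundle summand $\mc O_C(a)\subset E'$, necessarily with $a\geq 1$, is mapped to a nonzero subsheaf $B_0\subset B$; set $d_0:=\deg B_0\geq 1$. I will show this, combined with the one-dimensional image hypothesis, forces $\deg B\leq 1$, contradicting $d>1$.

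First I would pin down that $d_0=1$. The kernel of $\mc O_C(a)\twoheadrightarrow B_0$ is $\mc O_C(a-d_0)$, and $H^1(\mc O_C(a))=0$ since $a\geq 0$, so the long exact sequence yields $H^0(\mc O_C(a))\twoheadrightarrow H^0(B_0)$. Left exactness of $H^0$ applied to $B_0\hookrightarrow B$ gives $H^0(B_0)\hookrightarrow H^0(B)$, so the image of $H^0(\mc O_C(a))$ in $H^0(B)$ has dimension $d_0$; being contained in the one-dimensional image of $H^0(E)\to H^0(B)$, this forces $d_0=1$ and $B_0\cong k_p$ for some $p\in C$. Fix a generator $w$ of $H^0(B_0)$. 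Because $\mf m_p$ annihilates $B_0\cong k_p$, the line $kw\subset H^0(B)$ is stable under the $\mc O_C$-action, and as a subsheaf of $B$ it coincides with $B_0$. Moreover $kw$ is already all of the one-dimensional image of $H^0(E)\to H^0(B)$.

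To finish, pass to $B/kw$. The kernel of $H^0(B)\to H^0(B/kw)$ is $kw$, so the composition $H^0(E)\to H^0(B)\to H^0(B/kw)$ vanishes. Now $E=\bigoplus_j \mc O_C(a_j)$ with all $a_j\geq 0$ is globally generated, so any morphism $E\to F$ whose induced map on global sections is zero must itself be zero; this follows from the surjection $H^0(E)\otimes \mc O_C\twoheadrightarrow E$ together with ${\rm Hom}(\mc O_C,F)=H^0(F)$. Taking $F=B/kw$ gives that $E\to B$ has image inside $kw$; but $E\to B$ is surjective while $kw\subset B$ has length $1$, contradicting $d>1$. The main delicate point is the identification of $kw\subset H^0(B)$ with the subsheaf $B_0$ of $B$; once this is in place the rest is formal bookkeeping, relying only on the vanishing $H^1(\mc O_C(a_j))=0$ coming from $a_j\geq 0$.
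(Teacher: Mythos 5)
Your route is genuinely different from the paper's and, up to one misjustified step, it works. The paper argues summand by summand: using the one-dimensionality it factors the evaluation map $H^0(C,E)\otimes\mc O_C\to B$ through a single surjection $\mc O_C\to B$, deduces that a positive summand $\mc O_C(a)\subset E'$ would surject onto $B$, and then derives the contradiction by an explicit cyclic-module computation $k[T]\to k[T]/(p(T))$ with $\deg p=d>1$. You avoid coordinates entirely: you bound the length of the image subsheaf $B_0$ of $\mc O_C(a)$ by a section count, identify the one-dimensional image of $H^0(C,E)$ with $H^0(B_0)=kw$, and then use global generation of $E$ to force $E\to B/B_0$ to vanish, so $B=B_0$ has length $1$, contradicting $d>1$. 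Both proofs hinge on global generation of $E$; yours replaces the paper's explicit module calculation with cohomological bookkeeping and isolates the use of $d>1$ at the very end.

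The flaw is in your first step. From $0\to\mc O_C(a-d_0)\to\mc O_C(a)\to B_0\to 0$, the obstruction to surjectivity of $H^0(\mc O_C(a))\to H^0(B_0)$ is $H^1$ of the kernel $\mc O_C(a-d_0)$, not $H^1(\mc O_C(a))$; when $d_0>a+1$ that $H^1$ is nonzero and surjectivity really fails (e.g.\ $a=1$ and $B_0$ of length $3$), so your claim that the image of $H^0(\mc O_C(a))$ in $H^0(B)$ has dimension $d_0$ is not correct in general. What is true, and suffices, is that this image has dimension $h^0(\mc O_C(a))-h^0(\mc O_C(a-d_0))=\min\{d_0,a+1\}$, which is at least $2$ whenever $d_0\ge 2$ (since $a\ge 1$); that already contradicts the one-dimensional hypothesis and forces $d_0=1$. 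Once $d_0=1$, the surjectivity you wanted does hold, because the kernel is $\mc O_C(a-1)$ with $a-1\ge 0$, and the remainder of your argument (the identification of $kw$ with the image of $H^0(C,E)$, the vanishing of $H^0(C,E)\to H^0(B/B_0)$, and the global-generation step) goes through as written.
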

\begin{proof}
	Let $V$ denote the vector space $H^0(C,E)$. Let $\mc O(a)$ be a sub-bundle 
	of $E'$. Then $a>0$. To show that $\mc O(a)$ is mapped to 0 
	by the given projection $E \to B$, 
	it suffices 
	to show that the global sections are mapped to 0. If not, then there is a 
	commutative diagram
	\[\xymatrix{
		\mc O\ar[r]\ar[d] & H^0(C,\mc O(a))\otimes \mc O\ar[r] & V \otimes \mc O\ar[r]\ar[d] & \mc O\ar[d]\\
		\mc O(a)\ar[rr] && E\ar[r] & B 
	}
	\]
	The square on the right exists because of the assumption that 
	the image $H^0(C,E)\to H^0(C,B)$ is one-dimensional. By assumption
	the top horizontal composite map is nonzero. Since the right 
	vertical arrow is a surjection, 
	it follows that the composite map $\mc O(a)\to B$ is surjective. 
	We may choose homogeneous coordinates $X$ and $Y$ on $C = \mathbb{P}^1,$ 
	so that the support 
	of $B$ is contained in the open set $Y\neq 0$. Let $T$ denote the 
	affine coordinate $X/Y$. Thus, the map 
	$\mc O(a)\to B$ can be thought of as a surjective map $k[T]\to k[T]/(p(T))$
	of $k[T]$ modules. 
	Viewed like this, a basis for the global sections of $\mc O(a)$ is given by the functions $T^i$ where $0\leq i\leq a$. 
	The degree of $p(T)$ is equal to the length of the module $k[T]/(p(T))\cong B$, 
	which is exactly $d$. Since $d>1$, we get a contradiction to 
	the assumption that the image of $H^0(C,\mc O(a))$ is one-dimensional. 
\end{proof}

There is a natural inclusion $j':\mc Q(E/E',d)\hookrightarrow \mc Q'$ which 
commutes with the Hilbert-Chow maps. Since the bundle $E/E'$ is trivial,
$\mc Q(E/E',d)=\mc Q(n'',d)=:\mc Q''$. Let $n={\rm dim}(H^0(C,E))$ 
and let $\mc Q:=\mc Q(n,d)$. The surjection $H^0(C,E)\otimes \mc O_C\to E$
defines an inclusion $j:\mc Q'\hookrightarrow \mc Q$.
The following diagram is commutative. 
%\marginpar{May be better to recall what $\mc Q$ and the inclusion $j$ are here.{\color{red}Yes.}}

\[\xymatrix{
	\mc Q''\ar@{^(->}[r]^{j'}\ar[dr]_{\Phi''} & 
		\mc Q'\ar@{^(->}[r]^{j}\ar[d]^{\Phi'} &\mc Q\ar[dl]^{\Phi}\\
	&\mb P^d
}
\]
The $\Phi$'s denote the Hilbert-Chow maps.
In particular there are subsets 
$\mc U''\subset \mc Q''$ and $\mc Z''\subset \mc Q''$ as in 
Definition \ref{def-Z}. 
\begin{corollary}
	Assume $d>1$. Then $\mc Q'\setminus j^{-1}(\mc U)$
	is precisely $j'(\mc Z'')$. 
\end{corollary}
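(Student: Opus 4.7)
The plan is to rewrite the claim as $j^{-1}(\mc Z) = j'(\mc Z'')$ and then characterize both sides by unpacking what they say about the natural map $H^0(C,E) \to H^0(C,B)$ on global sections. First I would translate membership in $j^{-1}(\mc Z)$: the inclusion $j$ sends a point $x \in \mc Q'$ corresponding to $E \twoheadrightarrow B$ to the quotient $\mc O_C^n \twoheadrightarrow E \twoheadrightarrow B$, where the first arrow is the evaluation surjection from the definition of $j$. Under the canonical identification $H^0(C,\mc O_C^n) \cong H^0(C,E)$, the image of $H^0(C,\mc O_C^n) \to H^0(C,B)$ coincides with the image of $H^0(C,E) \to H^0(C,B)$. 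Hence $j(x) \in \mc Z$ if and only if this latter image is one-dimensional.

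For the inclusion $j'(\mc Z'') \subseteq j^{-1}(\mc Z)$, I would exploit the fact that $H^0(C,E) \to H^0(C,E/E')$ is surjective. Indeed, with $E = \mc O_C \oplus \bigoplus_{i=1}^{r-1}\mc O_C(a_i)$ and the Harder-Narasimhan filtration on $\mb P^1$ collecting the summands of strictly positive degree into $E'$, the bundle $E'$ is a direct sum of line bundles of positive degree, whence $H^1(C,E') = 0$. Given $y \in \mc Q''$ corresponding to $E/E' \twoheadrightarrow B$, the composite $E \twoheadrightarrow E/E' \twoheadrightarrow B$ represents $j'(y)$, and by surjectivity the image of $H^0(C,E) \to H^0(C,B)$ coincides with the image of $H^0(C,E/E') \to H^0(C,B)$. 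Therefore $j(j'(y)) \in \mc Z$ precisely when $y \in \mc Z''$.

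For the reverse inclusion $j^{-1}(\mc Z) \subseteq j'(\mc Z'')$, I would invoke Lemma \ref{lemma-d>1-E}, which is exactly where the assumption $d > 1$ is used: if $x \in j^{-1}(\mc Z)$ corresponds to $E \twoheadrightarrow B$ with one-dimensional image of $H^0(C,E) \to H^0(C,B)$, then the lemma says the quotient factors through $E \twoheadrightarrow E/E'$, so $x = j'(y)$ for a unique $y \in \mc Q''$. The argument of the previous paragraph then forces $y \in \mc Z''$.

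The substantive ingredient is Lemma \ref{lemma-d>1-E}, which is already established; the remaining verifications are routine diagram chases with the two evaluation maps, together with the vanishing $H^1(C,E') = 0$ coming from the HN structure of $E$. I do not anticipate any serious obstacle in turning this plan into a complete proof.
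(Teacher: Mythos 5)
Your proposal is correct and follows the paper's route exactly: the paper's proof of this corollary is simply ``follows from Lemma \ref{lemma-d>1-E}'', and your write-up supplies precisely the routine details left implicit there --- identifying $\mc Q'\setminus j^{-1}(\mc U)$ with $j^{-1}(\mc Z)$, matching the images of $H^0(C,\mc O_C^n)$, $H^0(C,E)$ and $H^0(C,E/E')$ in $H^0(C,B)$ via the evaluation map and the surjectivity coming from $H^1(C,E')=0$, and invoking Lemma \ref{lemma-d>1-E} for the factorization through $E/E'$.
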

\begin{proof}
	Follows from Lemma \ref{lemma-d>1-E}.
\end{proof}

\begin{lemma}\label{lemma-L_{2,x}-Q'}
	Assume $d>1$.
	Given any point $x\in \mc Q'\setminus j^{-1}(\mc U)$ there exists a curve 
	$L_{2,x} \cong \mb P^1 \hookrightarrow \mc Q'$ passing through $x$ 
	such that 
	$[\mc L_{L,\mc Q'}]\cdot [L_{2,x}]=0$ and
	$[\Phi''^*\mc O_{\mb P^d}(1)]\cdot [L_{2,x}]=1$.
\end{lemma}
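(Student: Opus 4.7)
The plan is to reduce to the already-handled trivial-bundle case by using the inclusion $j':\mc Q''\hookrightarrow \mc Q'$ and the corollary just stated. Since $x\in \mc Q'\setminus j^{-1}(\mc U)=j'(\mc Z'')$, we may write $x=j'(x'')$ for a unique $x''\in \mc Z''\subset \mc Q''$. Lemma \ref{lemma-L_{2,x}}, applied to the trivial-bundle Quot scheme $\mc Q''$, supplies a line $L_{2,x''}\cong \mb P^1\hookrightarrow \mc Q''$ through $x''$ with $[\mc L_{d-1}]\cdot [L_{2,x''}]=0$ and $[\Phi''^*\mc O_{\mb P^d}(1)]\cdot [L_{2,x''}]=1$. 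Set $L_{2,x}:=j'(L_{2,x''})$; since $j'$ is a closed immersion, this is a copy of $\mb P^1$ inside $\mc Q'$ passing through $x$.

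Next I would verify that the two intersection numbers transfer correctly under $j'$. The triangle containing $\Phi'$, $\Phi''$, and $j'$ commutes, so $j'^*\Phi'^*\mc O_{\mb P^d}(1)=\Phi''^*\mc O_{\mb P^d}(1)$, which immediately gives $[\Phi'^*\mc O_{\mb P^d}(1)]\cdot [L_{2,x}] = [\Phi''^*\mc O_{\mb P^d}(1)]\cdot [L_{2,x''}]=1$. For the $\mc L_{L,\mc Q'}$-intersection, the key point is the identification $j'^*\mc L_{L,\mc Q'}\cong \mc L_{L,\mc Q''}$. This follows from the universal property: the map $j'$ is defined so that $(id_C\times j')^*(p_C^*E\to \mc B')$ is obtained by precomposing the universal quotient $p_C^*(E/E')\to \mc B''$ on $C\times \mc Q''$ with the surjection $p_C^*E\to p_C^*(E/E')$, hence $(id_C\times j')^*\mc B'=\mc B''$. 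Tensoring with $p_C^*L$, applying cohomology and base change (valid because $R^1 p_{\mc Q'*}(\mc B'\otimes p_C^*L)=0$ by \cite[Lemma 3.1]{GS20}, so the pushforward is locally free of rank $d$), and taking determinants yields the claimed isomorphism.

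Since $\mc L_{L,\mc Q''}$ is precisely the class called $\mc L_{d-1}$ in Section \ref{sect1} (via the identification $\mc L_{L,\mc Q''}\cong \mc O_{\mc Q''}(1)\otimes \Phi''^*\mc O_{\mb P^d}(d-1)$ from \cite[Proposition 6.1]{GS20}), the projection formula gives
\[
[\mc L_{L,\mc Q'}]\cdot [L_{2,x}] \;=\; [j'^*\mc L_{L,\mc Q'}]\cdot [L_{2,x''}] \;=\; [\mc L_{d-1}]\cdot [L_{2,x''}] \;=\; 0.
\]
This completes the argument.

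The only non-routine step is the base-change identification $j'^*\mc L_{L,\mc Q'}\cong \mc L_{L,\mc Q''}$; the subtlety is that one must invoke both the universal property of $\mc Q'$ (to recognize the composed quotient as giving the map $j'$) and the vanishing $R^1p_{\mc Q'*}(\mc B'\otimes p_C^*L)=0$ to ensure that forming the pushforward commutes with pullback along $j'$. Once that identification is in hand, everything else is a direct transfer of the trivial-bundle lemma along a closed immersion commuting with the Hilbert--Chow map.
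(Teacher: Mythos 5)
Your proposal is correct and follows essentially the same route as the paper: reduce to Lemma \ref{lemma-L_{2,x}} by pushing the line $L_{2,x''}\subset\mc Q''$ forward along $j'$ and checking the two intersection numbers via pullback of $\Phi'^*\mc O_{\mb P^d}(1)$ and of $\mc L_{L,\mc Q'}$. The only difference is that where the paper obtains $j'^*\mc L_{L,\mc Q'}\cong\mc L_{d-1,\mc Q''}$ by factoring through $\mc Q$ and citing \cite[Theorem 6.2, Lemma 3.14]{GS20}, you prove it directly from the universal property of $j'$ plus cohomology and base change (legitimate, since the fibers of $\mc B'\otimes p_C^*L$ are torsion, so $H^1$ vanishes fiberwise and the rank-$d$ pushforward commutes with base change), which is a valid and self-contained substitute for those citations.
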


\begin{proof}
	We have maps $\mc Q''\xrightarrow{j'} \mc Q'\xrightarrow{j}\mc Q$.
	As observed above, $x$ is in the image of $\mc Z''$.
	It can be shown that $j^*\mc L_{d-1}=\mc L_{L,\mc Q'}$, for this see the 
	proof of \cite[Theorem 6.2]{GS20}.
	Thus, the Lemma is clear using Lemma \ref{lemma-L_{2,x}} once we observe that 
	$$j'^*\mc L_{L,\mc Q'}\cong j'^*j^*\mc L_{d-1}\cong \mc L_{d-1,\mc Q''}\,.$$
	This is easily seen using \cite[Lemma 3.14]{GS20}.
\end{proof}

\begin{theorem}\label{general-main}
	Assume  $d>1$.
	Fix integers $a,b>0$. If $x\in \mc Q'$ such that $j(x)\in \mc U$, then we have 
	$$\varepsilon(a\cdot [\mc L_{L,\mc Q'}]
	+b\cdot [\Phi'^*\mc O_{\mb P^d}(1)],x)=a.$$ 
	If $x\in \mc Q'\setminus \mc U$ we have
	$$\varepsilon(a\cdot [\mc L_{L,\mc Q'}]
	+b\cdot [\Phi'^*\mc O_{\mb P^d}(1)],x)= {\rm min}\{a,b\}.$$
\end{theorem}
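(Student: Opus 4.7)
The plan is to closely mirror the argument of Theorem \ref{trivial-main}, transferring all statements from $\mc Q$ to $\mc Q'$ via the closed immersion $j:\mc Q'\hookrightarrow \mc Q$. The key identity enabling this transfer is $j^*\mc L_{d-1}=\mc L_{L,\mc Q'}$ (recorded in the proof of Lemma \ref{lemma-L_{2,x}-Q'}); together with the projection formula and the fact that closed immersions preserve multiplicities of curves, this ensures that intersection computations on $\mc Q'$ reduce cleanly to intersection computations on $\mc Q$. Throughout, I will assume the convention, consistent with Lemma \ref{lemma-L_{2,x}-Q'}, that $\mc Q'\setminus \mc U$ in the statement means $\mc Q'\setminus j^{-1}(\mc U)$.

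First I would establish the upper bounds, which are one-line substitutions into the definition of $\varepsilon$. The curve $L_{1,x}$ from Lemma \ref{lemma-L_{1,x}-Q'} furnishes the bound $\leq a$ for every $x\in \mc Q'$, and when $x\in \mc Q'\setminus j^{-1}(\mc U)$ the curve $L_{2,x}$ from Lemma \ref{lemma-L_{2,x}-Q'} furnishes the additional bound $\leq b$. For the matching lower bound in the case $j(x)\in \mc U$, I fix an irreducible reduced curve $X\subset \mc Q'$ through $x$ and push it forward to $\mc Q$: the image $j(X)$ is irreducible and meets $\mc U$ in a nonempty open subset (since $j(x)\in \mc U$), so Lemma \ref{lemma-an injective map from an open set} forces $\Psi|_{j(X)}$ to be non-constant. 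A hyperplane section of $\mb P^N$ through $\Psi(j(x))$ not containing $\Psi(j(X))$ then pulls back to a section of $\mc L_{d-1}$ through $j(x)$ not containing $j(X)$, and B\'ezout together with the projection formula gives $[\mc L_{L,\mc Q'}]\cdot [X]=[\mc L_{d-1}]\cdot [j(X)]\geq \mathrm{mult}_{j(x)} j(X)=\mathrm{mult}_x X$, which yields $\varepsilon\geq a$.

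For the lower bound in the case $j(x)\notin \mc U$, I would argue directly on $\mc Q'$, exactly in the style of the second half of Theorem \ref{trivial-main}. Since $[\mc L_{L,\mc Q'}]+[\Phi'^*\mc O_{\mb P^d}(1)]$ is ample, at least one of the two summands has positive degree on $X$; then either the morphism defined by the globally generated bundle $\mc L_{L,\mc Q'}$ (globally generated by Lemma \ref{lemma-L_{1,x}-Q'}) or the Hilbert-Chow morphism $\Phi'$ is non-constant on $X$, and the usual B\'ezout argument produces a section through $x$ not containing $X$, yielding a ratio of at least $a$ or at least $b$, hence at least $\min\{a,b\}$. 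I expect the only genuinely subtle step to be the transfer in the case $j(x)\in \mc U$: one must be certain that the hyperplane produced on $\mc Q$ avoids the entire curve $j(X)$, which reduces to the non-constancy of $\Psi$ on the open subset $j(X)\cap \mc U$. This in turn is immediate from the combination of the irreducibility of $j(X)$ and the injectivity of $\Psi|_{\mc U}$, so once this verification is made the rest of the argument is bookkeeping parallel to the trivial case.
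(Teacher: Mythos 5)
Your proposal is correct and follows essentially the same route as the paper: upper bounds from the explicit curves $L_{1,x}$ and $L_{2,x}$ of Lemmas \ref{lemma-L_{1,x}-Q'} and \ref{lemma-L_{2,x}-Q'}, the lower bound at points with $j(x)\in\mc U$ via the injectivity of $\Psi|_{\mc U}$ (Lemma \ref{lemma-an injective map from an open set}) and B\'ezout, and the lower bound on $\mc Q'\setminus j^{-1}(\mc U)$ from ampleness of $[\mc L_{L,\mc Q'}]+[\Phi'^*\mc O_{\mb P^d}(1)]$ together with sections of the two globally generated bundles. The only cosmetic difference is that you compute the B\'ezout inequality on $\mc Q$ via $j_*$ and the projection formula, while the paper pulls the hyperplane section back to a section of $\mc L_{L,\mc Q'}=j^*\mc L_{d-1}$ and argues on $\mc Q'$; these are interchangeable, and your reading of $\mc Q'\setminus\mc U$ as $\mc Q'\setminus j^{-1}(\mc U)$ matches the paper's intent.
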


\begin{proof}
	Let $x\in \mc Q'$ such that $j(x)\in \mc U$. 
	Let $X$ be any irreducible and reduced curve in $\mc Q'$ passing through $x$.
	Consider the composite map 
	$$X\hookrightarrow \mc Q'\xrightarrow{j} \mc Q \xrightarrow{\Psi} \mb P^N\,.$$
	Then by Lemma \ref{lemma-an injective map from an open set} the map
	$\Psi(j(X)\cap \mc U)\neq {\rm pt}$. Hence there exists a section
	$H^0(\mc Q',\mc L_{L,\mc Q'})$ whose zero locus $H$ passes 
	through $x$ such that $X$ is not
	contained in $H$. By B\'ezout's theorem, we get     
	$$[\mc L_{L,\mc Q'}]\cdot [X]
	=[H]\cdot [X]\geq ({\rm mult}_xH)({\rm mult}_xX)\geq {\rm mult}_xX\,.$$
	Hence    
	$$(a\cdot [\mc L_{L,\mc Q'}]+b\cdot [\Phi'^*\mc O_{\mb P^d}(1)])\cdot [X]
	\geq a\cdot [\mc L_{L,\mc Q'}] \cdot [X]\geq a\cdot {\rm mult}_xX\,.$$
	Therefore, $\varepsilon(a\cdot [\mc L_{L,\mc Q'}]
	+b\cdot [\Phi'^*\mc O_{\mb P^d}(1)],x)\geq a$.
	Now  by Lemma \ref{lemma-L_{1,x}-Q'} we have 
	$$\varepsilon(a\cdot [\mc L_{L,\mc Q'}]
	+b\cdot [\Phi'^*\mc O_{\mb P^d}(1)],x)
	\leq \dfrac{(a\cdot [\mc L_{L,\mc Q'}]
		+b\cdot [\Phi'^*\mc O_{\mb P^d}(1)])\cdot [L_{1,x}]}{{\rm mult}_xL_{1,x}}
	=a\,.$$
	Hence, we get for $x\in \mc U$
	$$\varepsilon(a\cdot [\mc L_{L,\mc Q'}]
	+b\cdot [\Phi'^*\mc O_{\mb P^d}(1)],x)=a\,.$$    
	
	Now let $j(x)\in \mc Z$.
	We first prove the inequality 
	$$\varepsilon(a\cdot [\mc L_{L,\mc Q'}]
	+b\cdot [\Phi'^*\mc O_{\mb P^d}(1)],x)\geq {\rm min}\{a,b\}\,.$$  
	Let $X$ be any irreducible and reduced curve in $\mc Q'$ 
	passing through $x$.  
	We have maps $\Psi\circ j:\mc Q' \to \mc Q\to \mb PH^0(\mc Q,\mc L_{d-1})$ 
	and $\Phi':\mc Q'\to \mb P^d$.
	The class $[\mc L_{L,\mc Q'}]+[\Phi'^*\mc O_{\mb P^d}(1)]$ is ample,
	thus, it cannot happen that  
	$\Psi$ and $\Phi'$ both are constant on $X$. 
	First consider the case when $\Psi$ is non-constant on $X$.
	Hence, there 
	exists a section 
	$H_1\in H^0(\mc Q',\mc L_{L,\mc Q'} )$ passing 
	through $x$ such that $X$ is not contained in $H_1$. 
	By B\'ezout's theorem, we will have
	$$[\mc L_{L,\mc Q'}]\cdot [X]
	=[H_1]\cdot [X]\geq {\rm mult}_xX\,,$$
	which gives 
	$$\frac{(a\cdot [\mc L_{L,\mc Q'}]
		+b\cdot [\Phi'^*\mc O_{\mb P^d}(1)])\cdot [X]}{{\rm mult}_xX}
	\geq a\geq {\rm min}\{a,b\}\,. $$
	Next consider the case when $\Phi'$ is non-constant on $X$.
	Then there is a section
	$H_2\in H^0(\mc Q', \Phi'^*\mc O_{\mb P^d}(1))$ passing through $x$ such that 
	$X$ is not contained in $H_2$. Again we have by B\'ezout's theorem
	$$[\Phi'^*\mc O_{\mb P^d}(1)]\cdot [X]=[H_2]\cdot [X]\geq {\rm mult}_xX\,,$$
	which gives 
	$$\frac{(a\cdot [\mc L_{L,\mc Q'}]
		+b\cdot [\Phi'^*\mc O_{\mb P^d}(1)])\cdot [X]}{{\rm mult}_xX}
	\geq b\geq {\rm min}\{a,b\}\,. $$
	Therefore we get
	$$\varepsilon(a\cdot [\mc L_{L,\mc Q'}]
	+b\cdot [\Phi'^*\mc O_{\mb P^d}(1)],x)\geq {\rm min}\{a,b\}\,.$$
	Now Lemma \ref{lemma-L_{1,x}-Q'} implies that
	$$\varepsilon(a\cdot [\mc L_{d-1}]
	+b\cdot [\Phi^*\mc O_{\mb P^d}(1)],x) \leq \dfrac{(a\cdot [\mc L_{d-1}]
		+b\cdot [\Phi^*\mc O_{\mb P^d}(1)])\cdot [L_{1,x}]}{{\rm mult}_xL_{1,x}}=a\,.$$
	Similarly Lemma \ref{lemma-L_{2,x}-Q'} implies that 
	$$\varepsilon(a\cdot [\mc L_{d-1}]
	+b\cdot [\Phi^*\mc O_{\mb P^d}(1)],x)\leq \dfrac{(a\cdot [\mc L_{d-1}]
		+b\cdot [\Phi^*\mc O_{\mb P^d}(1)])\cdot [L_{2,x}]}{{\rm mult}_xL_{2,x}}=b\,.$$
	Therefore, we get that for $x\in \mc Z$ we have 
	$$ \varepsilon(a\cdot [\mc L_{d-1}]
	+b\cdot [\Phi^*\mc O_{\mb P^d}(1)],x)={\rm min}\{a,b\}\,.$$
	This completes the proof of the theorem. 
\end{proof}

As before, we immediately obtain the following corollary. 

\begin{corollary}\label{general-cor}
With the notation as in Theorem \ref{general-main}, we have 
\begin{enumerate}
\item $\varepsilon(a\cdot [\mc L_{L,\mc Q'}]
	+b\cdot [\Phi'^*\mc O_{\mb P^d}(1)],1) = a$. 
\item $\varepsilon(a\cdot [\mc L_{L,\mc Q'}]
	+b\cdot [\Phi'^*\mc O_{\mb P^d}(1)]) =  {\rm min}\{a,b\}$.
\end{enumerate}
\end{corollary}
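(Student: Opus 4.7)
The plan is to derive both statements directly from Theorem \ref{general-main} using the definitions $\varepsilon(X,L,1)=\sup_{x\in X}\varepsilon(L,x)$ and $\varepsilon(X,L)=\inf_{x\in X}\varepsilon(L,x)$, exactly as in the proof of Corollary \ref{trivial-cor}. By that theorem, the pointwise Seshadri constant of $a\cdot [\mc L_{L,\mc Q'}]+b\cdot [\Phi'^*\mc O_{\mb P^d}(1)]$ takes only two values on $\mc Q'$: it equals $a$ at points $x$ with $j(x)\in \mc U$, and it equals $\min\{a,b\}$ at points in the complement. Since $a\ge \min\{a,b\}$ in every case, the supremum will be $a$ and the infimum will be $\min\{a,b\}$, provided both of these loci are non-empty.

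The only step that requires a small argument is therefore the non-emptiness of the two loci. For $j^{-1}(\mc U)\ne \emptyset$, I would exhibit a quotient $E\to B$ of degree $d>1$ whose induced map $H^0(C,E)\to H^0(C,B)$ has image of dimension at least two: choosing two distinct points $c_1,c_2\in C$ and a quotient $E\to k_{c_1}\oplus k_{c_2}\to\cdots\to B$ obtained by projecting onto two different summands of $E$ (and filling the remaining $d-2$ length with any choice), one sees that the two constant sections projecting to $(1,0)$ and $(0,1)$ give a two-dimensional image. For the complement, I would invoke the corollary stated just before Lemma \ref{lemma-L_{2,x}-Q'}, which identifies $\mc Q'\setminus j^{-1}(\mc U)$ with $j'(\mc Z'')$, and produce a point of $\mc Z''$ by taking a quotient that factors as $\mc O_C^{n''}\to \mc O_C\to B'$ with $B'$ a length-$d$ torsion sheaf; the image of $H^0(C,\mc O_C^{n''})$ in $H^0(C,B')$ then equals the image of the one-dimensional space $H^0(C,\mc O_C)$, so is one-dimensional.

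With both loci non-empty, $\sup_{x\in \mc Q'}\varepsilon(\cdot,x)=a$ and $\inf_{x\in \mc Q'}\varepsilon(\cdot,x)=\min\{a,b\}$, proving (1) and (2). No serious obstacle is anticipated: the corollary is essentially a bookkeeping consequence of Theorem \ref{general-main}, and the only substantive input beyond the definitions is the non-emptiness check above, which is routine.
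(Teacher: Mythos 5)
Your proof is correct and takes essentially the same route as the paper: the corollary is read off immediately from the pointwise values in Theorem \ref{general-main} together with the definitions of $\varepsilon(L,1)$ and $\varepsilon(L)$, exactly as in Corollary \ref{trivial-cor}. The only addition is your explicit verification that both loci $j^{-1}(\mc U)$ and $\mc Q'\setminus j^{-1}(\mc U)$ are non-empty (a point the paper leaves implicit), and your constructions of the required quotients are sound.
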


\section{Seshadri constants on $\mc Q(E,1)$}\label{sect3}
Let $Y$ be a smooth projective complex curve and $E$ a vector
bundle on $Y$. Let $X = \mathbb{P}(E)$ be the projective
bundle associated to $E$ over $Y$. The Quot scheme $\mc Q(E,1)$
is isomorphic to $X$. Denote by $\xi:=\mc O_{\mb P(E)}(1)$ 
and by $\mf f$ the divisor which is a fibre of $\pi$.

Let $Q$ denote a vector bundle quotient of $E$ with the smallest 
slope {and largest rank}. Note that
if $$0=E_0 \subset E_1 \subset \ldots \subset E_{d-1} \subset E_d = E$$ is the
Harder-Narasimhan filtration of $E$, then $Q = E/E_{d-1}$. Let $e : =
\mu(Q)$ denote the slope of $Q$. 

\begin{theorem}[Miyaoka] The nef cone of $X$ is spanned by $\xi-e\mf f$
	and $\mf f$. 
\end{theorem}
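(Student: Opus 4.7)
The plan is to split the claim into two pieces. Since the N\'eron-Severi group of $X$ is free of rank two on $\xi$ and $\mf f$, and $\mf f$ is visibly nef (for any irreducible $C\subset X$ either $C$ lies in a fiber, giving $\mf f\cdot C = 0$, or $C$ dominates $Y$, giving $\mf f\cdot C > 0$), it suffices to verify that $\xi - e\mf f$ is nef and that $\xi - (e+\delta)\mf f$ fails to be nef for every $\delta > 0$.

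For nefness of $\xi - e\mf f$, I would fix an irreducible curve $C\subset X$. If $C$ sits inside a fiber, then $\mf f\cdot C = 0$ and $\xi\cdot C \geq 0$ because $\xi$ restricts to $\mc O(1)$ on $\mb P^{r-1}$. Otherwise $C$ surjects onto $Y$; letting $\nu:\t C\to C$ be the normalization and $f:\t C\to Y$ the induced map of degree $n$, base change along $f$ lifts $\t C$ to a section of $\mb P(f^*E)\to \t C$, which corresponds to a line-bundle quotient $f^*E\twoheadrightarrow L$. Standard projection formulas then give $\xi\cdot C = \deg L$ and $\mf f\cdot C = n$. The key input is that since $Q = E/E_{d-1}$ is the final graded piece of the Harder-Narasimhan filtration and semistability is preserved under finite pullbacks of smooth complex curves, $\mu_{\min}(f^*E) = ne$; hence any line-bundle quotient of $f^*E$ has degree at least $ne$, whence $(\xi - e\mf f)\cdot C \geq 0$.

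To see that $\xi - e\mf f$ lies on the boundary of the nef cone, I would restrict to the closed subvariety $\mb P(Q)\hookrightarrow \mb P(E)$ cut out by the quotient $E\to Q$. Writing $s := \mathrm{rk}(Q)$ and letting $\xi_Q, \mf f_Q$ denote the restrictions, the vanishing $\mf f_Q^2 = 0$ together with the Grothendieck identity $\xi_Q^s = (\deg Q)\,\mf f_Q\,\xi_Q^{s-1}$ and the fiberwise equality $\xi_Q^{s-1}\mf f_Q = 1$ yield $\xi_Q^s = \deg(Q) = se$, so for any $\delta > 0$,
$$(\xi_Q - (e+\delta)\mf f_Q)^s = \xi_Q^s - s(e+\delta)\,\xi_Q^{s-1}\mf f_Q = -s\delta < 0.$$
A nef class on the $s$-dimensional variety $\mb P(Q)$ cannot have strictly negative top self-intersection, so $\xi - (e+\delta)\mf f$ is not nef on $X$. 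Combined with the previous step, this pins the second extremal ray at $\xi - e\mf f$.

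The main obstacle I anticipate is justifying the slope identity $\mu_{\min}(f^*E) = ne$ cleanly: one needs that finite pullbacks of semistable bundles on smooth complex curves remain semistable, so that the Harder-Narasimhan filtration of $E$ pulls back to that of $f^*E$. This is the only non-formal ingredient in the argument and is standard via Narasimhan-Seshadri or a direct saturation argument using $f_*$; everything else reduces to intersection theory on $\mb P(E)$ and on the subbundle $\mb P(Q)$.
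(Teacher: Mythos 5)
Your proof is correct; note, however, that the paper itself gives no proof of this statement --- it is quoted as Miyaoka's theorem with pointers to \cite{M85} and \cite[Lemma 2.1]{F11} --- so there is no internal argument to compare against, and what you wrote is essentially the standard proof underlying those references. Two small points. First, to conclude that the nef cone is \emph{exactly} the cone spanned by $\mf f$ and $\xi-e\mf f$ you should also record that a nef class $a\xi+b\mf f$ must have $a\ge 0$, which follows by pairing with a line in a fibre; you have this ingredient implicitly (you note $\xi$ restricts to $\mc O(1)$ on fibres), but without it the reduction ``it suffices to show $\xi-e\mf f$ is nef and $\xi-(e+\delta)\mf f$ is not'' does not by itself pin down the ray through $\mf f$ as a boundary of the cone. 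Second, the one non-formal input --- that semistability is preserved under pullback along a finite morphism of smooth curves, so that $\mu_{\min}(f^*E)=ne$ and hence every line-bundle quotient of $f^*E$ has degree at least $ne$ --- genuinely requires characteristic zero (it fails for Frobenius pullbacks in characteristic $p$, where the nef cone is instead controlled by the strongly semistable Harder--Narasimhan data); this matches the section's hypothesis that $Y$ is a complex curve, and in the case the paper actually uses later ($Y=\mb P^1$, so $E$ splits as a sum of line bundles) the needed degree bound for line-bundle quotients can be checked directly without any semistability argument. Your boundary computation on $\mb P(Q)$, using $\mf f_Q^2=0$, the Grothendieck relation $\xi_Q^s=\deg(Q)=se$, and the fact that a nef class has nonnegative top self-intersection on the $s$-dimensional variety $\mb P(Q)$, is correct.
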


\noindent 
See \cite{M85} and \cite[Lemma 2.1]{F11}. 
The dual basis of the closed cone of curves $\overline{NE}(X)$ is
given by extremal rays spanned by two $1$-cycles $C_1, C_2$, where
$C_2$ denotes the class of a line in a fibre of $\pi$ (which is
isomorphic to the projective space $\mathbb{P}^{{\rm rk}(E)-1}$). The other
1-cycle $C_1$ is not effective in general. {However, $C_1$ is the pushforward of 
a pseudoeffective 1-cycle via the natural embedding 
$\mb P(Q) \to \mb P(E)$; see \cite[Lemma 2.3]{F11}.  }

We now assume $Y = \mathbb{P}^1$. 
If $E$ is a trivial vector bundle then $\mb P(E) = \mb P^1 \times \mb P^{\text{rk}(E)-1}$. In this case, it 
is easy to compute the Seshadri constants of ample line bundles on $\mb P(E)$; for example, 
see \cite[Proposition 3.4(e)]{MR15}. 

Now let $E$ be a bundle 
such that $E\not\cong \mc O^{{\rm rk}(E)}$. Since 
$\mathbb{P}(E) \cong \mathbb{P}(E\otimes L)$ for any line bundle
$L$ on $Y$, we may assume $E = \mc{O}^s \oplus 
\mc O(a_1)\ldots \oplus \mc{O}(a_r)$ with $0<a_1 \le \ldots \le a_r$
(after tensoring $E$ with a suitable line
bundle). 
Then the quotient of $E$ with the smallest slope is 
$Q = \mc O^s$. In other words, 
$$E_{d-1} =  \underset{i \geq 1}\oplus \mc{O}(a_i).$$
So we have $e = \mu(Q) = 0$. 
In this case, the extremal ray $C_1$, in the above notation, is
spanned by the image of the section $$\mathbb{P}^1 \to X$$
corresponding to a rank 1 quotient 
$$E \to \mc{O}\,.$$
Indeed, first observe that $\mf f \cdot C_2 = 0$ and $\mf f\cdot C_1 = 1$. 
Next note that $\xi \cdot C_1 = \text{deg}(\mc{O})=0$ and  $\xi \cdot C_2 = 1$. 
So $C_2,C_1$ is dual to $\xi, \mf f$.

We first prove a general result about Seshadri constants on $X$.

\begin{proposition}\label{general} Let $L$ be an ample line bundle on $X$ which is numerically
	equivalent to the bundle $a\xi+b\mf f$, where $a,b$ are positive
	integers. Then {$a \ge$} $\varepsilon(X,L,x) \ge \textrm{min}\{a,b\}$ for all
	$x \in X$.
\end{proposition}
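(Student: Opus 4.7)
The plan is to establish the upper bound $\varepsilon(X,L,x) \leq a$ using a test curve that is a line in the fiber of $\pi$ through $x$, and the lower bound $\varepsilon(X,L,x) \geq \min\{a,b\}$ by splitting into two cases depending on whether an irreducible curve through $x$ is contained in a fiber of $\pi$ or dominates $Y = \mb P^1$.

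For the upper bound, I would fix $x \in X$ and take $\ell \subset \pi^{-1}(\pi(x)) \cong \mb P^{{\rm rk}(E)-1}$ to be any line through $x$. Since $\xi$ restricts to $\mc O(1)$ on each fiber and $\mf f \cdot \ell = 0$, we have $L \cdot \ell = a(\xi \cdot \ell) + b(\mf f \cdot \ell) = a$, while ${\rm mult}_x \ell = 1$. Plugging into the definition of the Seshadri constant gives $\varepsilon(X,L,x) \leq a$.

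For the lower bound, let $C$ be an irreducible reduced curve through $x$ and set $m := {\rm mult}_x C$. If $C$ is contained in a fiber $F$ of $\pi$, then $F \cong \mb P^{{\rm rk}(E)-1}$ and $\xi|_F = \mc O_F(1)$, while $\mf f \cdot C = 0$. An elementary B\'ezout argument in projective space (intersect $C$ with a hyperplane through $x$ that does not contain $C$) gives $\xi \cdot C = \deg C \geq m$, so $L \cdot C = a \deg C \geq am \geq \min\{a,b\} \cdot m$. If instead $C$ dominates $Y$, let $F$ be the fiber through $x$; this is a smooth divisor numerically equivalent to $\mf f$, and $C \not\subset F$. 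The standard local intersection inequality $(F \cdot C)_x \geq {\rm mult}_x F \cdot {\rm mult}_x C = m$, valid since $F$ is smooth at $x$, yields $\mf f \cdot C = F \cdot C \geq m$. Combining this with $\xi \cdot C \geq 0$ (using nefness of $\xi$) gives $L \cdot C \geq bm \geq \min\{a,b\} \cdot m$.

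There is no serious obstacle here: the argument is a direct application of B\'ezout together with the elementary multiplicity bound for a smooth divisor meeting a curve. The one point worth highlighting is that the nefness of $\xi$ itself, and not merely of $\xi - e\mf f$, is what powers Case~2, and this is precisely the feature provided by our normalization $e = \mu(Q) = 0$.
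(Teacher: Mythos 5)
Your proof is correct and follows essentially the same route as the paper: the upper bound via a line in the fiber through $x$, and the lower bound by splitting into the case $\pi(C)$ a point (B\'ezout with a hyperplane in the fiber) and the case $C$ dominating $\mb P^1$ (intersecting with the fiber divisor through $x$ and using $\xi\cdot C\ge 0$, which in the paper appears as the non-negativity of the coefficients in $C\equiv pC_1+qC_2$). Your closing remark that the normalization $e=\mu(Q)=0$ is what makes $\xi$ itself nef is accurate and consistent with the paper's setup.
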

\begin{proof} 
{To prove the first inequality, let $x \in X$. Then $x \in \pi^{-1}(\pi(x)) = \mb P^{r+s-1}$. Choose a line 
$l$ in $\mb P^{r+s-1}$ containing $x$. The ratio given by this line is $\frac{L \cdot l}{1} = a$. Hence 
$a \ge \varepsilon(X,L,x)$.} 

{Now we prove the second inequality.}
Let $C \subset X$ be an irreducible and reduced curve
	such that $m: =\text{mult}_x(C) > 0$. Write $C = pC_1+qC_2$ 
	for non-negative integers $p,q$.\\
	
	\noindent 
	\textbf{Case 1}: Suppose $\pi(C)$ is a point $y \in
	\mathbb{P}^1$. Then $p=0$ and $C$ is a curve in $\pi^{-1}(y)$ of
	degree $q$. Thus, $C\subset \mb P^{r+s-1}$ and if $H$ denotes 
	a general hyperplane through $x$ then 
	$$q=C\cdot H\geq ({\rm mult}_xC)({\rm mult}_xH)\geq m\,.$$
	So 
	$$\frac{L\cdot C}{m} = \frac{aq}{m} \ge a\geq {\rm min}\{a,b\}\,.$$
	
	\noindent 
	\textbf{Case 2}: Suppose  $\pi(C) = \mathbb{P}^1$. Let $W=
	\pi^{-1}(\pi(x))$. Since $C \not\subset W$, B\'ezout's theorem gives
	$$p = \mf f \cdot C = W\cdot C \ge ({\rm mult}_xC)({\rm mult}_xW)\geq m.$$
	Hence $$\frac{L\cdot C}{m} = \frac{aq+bp}{m} \ge b\geq {\rm min}\{a,b\}\,.$$
	So $\frac{L\cdot C}{m} \ge  \textrm{min}\{a,b\}$ for all curves $C$
	passing through $x$. This gives the desired bound 
	$\varepsilon(X,L,x) \ge \textrm{min}\{a,b\}$ for all
	$x \in X$.
\end{proof}

Now we obtain more precise values of the Seshadri constants on $X$. 
Recall that $E/E_{d-1}=\mc O^s$. We get a morphism $i:\mb P(E/E_{d-1})\hookrightarrow X$
using the quotient 
$$E\to E/E_{d-1}\to \mc O_{\mb P(E/E_{d-1})}(1)\,.$$ 
Let $Z$ denote the image of $i$. We have 
$Z \cong \mathbb{P}^1 \times \mathbb{P}^{s-1}$ and 
$i^*\xi=p_2^*\mc O_{\mb P^{s-1}}(1)$ and $i^*\mf f=p_1^*\mc O_{\mb P^1}(1)$
where $p_i$ are the projections from $\mb P^1\times \mb P^{s-1}$.

\begin{theorem}\label{main-projective-bundle}
	Let $L$ be an ample line bundle on $X$ which is numerically
	equivalent to the bundle $a\xi+b\mf f$, where $a,b$ are positive
	integers. Let $x \in X$. We have
	\begin{equation*}
		\varepsilon(L,x) \,=\, 
		\begin{cases}
			a, & \text{~if~}  x \notin Z, \\
			{\rm min}\{a,b\}, & \text{~if~} x \in Z.
		\end{cases}
	\end{equation*}
\end{theorem}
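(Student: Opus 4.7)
The plan is to bootstrap Proposition \ref{general}, which already gives $\min\{a,b\}\le \varepsilon(L,x)\le a$ for every $x\in X$. What remains is to prove $\varepsilon(L,x)\le b$ when $x\in Z$, and $\varepsilon(L,x)\ge a$ when $x\notin Z$.

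For $x\in Z=\mb P(\mc O^s)$, I would write $x=(y_0,[\bar\ell])$ with $\bar\ell\colon \mc O^s|_{y_0}\to k$, and extend $\bar\ell$ to a constant rank one quotient $\mc O^s\to \mc O$ of bundles on $\mb P^1$. Composing with the projection $E\to \mc O^s$ yields a rank one trivial quotient $E\to \mc O$, whose associated section $\mb P^1\hookrightarrow X$ has image $C_x\ni x$ with $\xi\cdot C_x=0$ and $\mf f\cdot C_x=1$; thus $L\cdot C_x=b$ and ${\rm mult}_xC_x=1$, giving $\varepsilon(L,x)\le b$. Combined with Proposition \ref{general} this yields the desired equality $\varepsilon(L,x)=\min\{a,b\}$.

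For $x\notin Z$, the key step is to show $\xi\cdot C\ge {\rm mult}_xC$ for every irreducible reduced curve $C\ni x$; then $L\cdot C=a(\xi\cdot C)+b(\mf f\cdot C)\ge a\cdot{\rm mult}_xC$, yielding $\varepsilon(L,x)\ge a$. Since $E=\mc O^s\oplus\bigoplus_i\mc O(a_i)$ with each $a_i\ge 1$ is globally generated, the surjection $\pi^*E\to \xi$ makes $\xi$ globally generated on $X$, with $H^0(X,\xi)=H^0(\mb P^1,E)$. Let $\phi\colon X\to \mb P(H^0(E)^*)$ be the morphism defined by the complete linear system $|\xi|$, sending $(y,[\ell])$ to the functional $\sigma\mapsto \ell(\sigma(y))$ on $H^0(E)$. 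My key claim is that $\phi^{-1}(\phi(x))=\{x\}$ set-theoretically for every $x\notin Z$. Granting this, any irreducible curve $C\ni x$ has $\phi(C)\ne\{\phi(x)\}$ (else $C\subseteq\phi^{-1}(\phi(x))=\{x\}$), so there is a hyperplane in $\mb P(H^0(E)^*)$ through $\phi(x)$ not containing $\phi(C)$. Its pullback is a section $H\in|\xi|$ through $x$ with $C\not\subset H$, and B\'ezout gives $\xi\cdot C=H\cdot C\ge {\rm mult}_xH\cdot{\rm mult}_xC\ge {\rm mult}_xC$, as required.

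The main obstacle is the separation claim. To verify it, suppose $\phi(y_0,[\ell_{y_0}])=\phi(y_1,[\ell_{y_1}])$ with $(y_0,[\ell_{y_0}])=x\notin Z$. If $y_0=y_1$, then the functionals $\sigma\mapsto \ell_{y_j}(\sigma(y_0))$ on $H^0(E)$ are proportional; surjectivity of ${\rm ev}_{y_0}\colon H^0(E)\to E_{y_0}$ (by global generation of $E$) forces $[\ell_{y_0}]=[\ell_{y_1}]$. If $y_0\ne y_1$, every $\sigma\in H^0(E)$ vanishing at $y_1$ must satisfy $\ell_{y_0}(\sigma(y_0))=0$. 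Such sections form $H^0(E(-y_1))=\bigoplus_i H^0(\mc O(a_i-1))$, since the $\mc O^s$ summand contributes nothing (its global sections are constants); because each $a_i\ge 1$, evaluation at $y_0$ of this space surjects onto $\bigoplus_i\mc O(a_i)|_{y_0}$. Hence $\ell_{y_0}$ vanishes on $\bigoplus_i\mc O(a_i)|_{y_0}$, meaning it factors through $\mc O^s|_{y_0}$, which forces $x\in Z$---contradicting the hypothesis.
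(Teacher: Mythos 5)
Your proposal is correct, but it proves the key case $x \notin Z$ by a different route than the paper. The paper argues curve-by-curve using the numerical decomposition $C = pC_1 + qC_2$: since $C \not\subset Z$, the line-bundle quotient $\phi^*E \to M$ on (the normalization of) $C$ does not kill $\phi^*E_{d-1}$, so some $\phi^*\mc O(a_i) \to M$ is nonzero and $q = \deg M \ge a_i\,{\rm deg}(\phi) = a_i p \ge p$; combined with the Case 1/Case 2 analysis of Proposition \ref{general} this gives $\tfrac{L\cdot C}{m} \ge a$ (in fact $\ge a+b$ for curves dominating $\mb P^1$). You instead exploit that $\xi$ is globally generated with $H^0(X,\xi)=H^0(\mb P^1,E)$, show the induced morphism $\phi\colon X \to \mb P(H^0(E)^*)$ separates points off $Z$ (your $y_0\ne y_1$ computation with $H^0(E(-y_1))=\bigoplus_i H^0(\mc O(a_i-1))$ is the crux and is sound, since each $a_i\ge 1$), and then apply B\'ezout to a hyperplane section through $x$ not containing $C$. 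This is exactly the strategy the paper itself uses for the Quot scheme (Lemma \ref{lemma-an injective map from an open set} and Theorem \ref{trivial-main}) and for the Grassmann bundle (Theorem \ref{main-grassmann}), so your argument unifies the $d=1$ case with those, at the cost of the explicit separation computation; the paper's degree argument is more elementary for $\mb P(E)$ and yields the sharper bound $a+b$ for horizontal curves. For $x\in Z$ you also differ mildly: the paper restricts $L$ to $Z\cong \mb P^1\times\mb P^{s-1}$ and quotes the known Seshadri constant there, while you exhibit the explicit section curve $C_x$ (a curve in the class $C_1$) with $\xi\cdot C_x=0$, $\mf f\cdot C_x=1$, giving $\varepsilon\le b$; both then combine with Proposition \ref{general} in the same way.
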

\begin{proof}
	Suppose first that $x \in Z$. The restriction of $L$ to $Z$ is the
	bundle $L_Z: = p_1^*\mc{O}_{\mb P^1}(a) \otimes
	p_2^*\mc{O}_{\mb P^{s-1}}(b)$. 
	It is easy to see that $\varepsilon(Z,L_Z,x) = \text{min}\{a,b\}$ (for
	example, see \cite[Proposition 3.4(e)]{MR15}). Obviously 
	$\varepsilon(X,L,x) \leq \varepsilon(Z,L_Z,x)$. By 
	Proposition \ref{general}, it follows that 
	$\varepsilon(X,L,x) = \text{min}\{a,b\}$. 
	
	Now assume $x \notin Z$. 
	Let $C \subset X$ be an irreducible and reduced
	curve such that $m: = \text{mult}_x(C) > 0$. So we have $C \not\subset
	Z$. 
	Let $\phi: C\to \mb P^1$ be the composition of the inclusion $C \subset
	X$ with the natural map $X \to \mb P^1$. 
	Let $\phi^*E\to M$ be the line bundle quotient which defines the map
	$C\to \mb P(E)$. 
	Then since $C$ is not contained in $Z$, the quotient map $\phi^*E\to M$
	does not factor through $\phi^*E/\phi^*E_{d-1}$. In other words, the
	composition map 
	$\phi^*E_{d-1} \to \phi^*E\to M$ is not zero. Recall 
	$E_{d-1} = \underset{i \geq 1}\oplus \mc{O}(a_i)$ and all $a_i>0$. Thus, 
	the composition
	$\phi^*\mc{O}(a_i)\subset \phi^*E_{d-1}\to M$ is non-zero for some $i$. 
	Write $C = pC_1+qC_2$ for non-negative integers $p,q$. Note that 
	$p=C\cdot \mf f={\rm deg}(\phi)$. Thus, we get
	$$q=\xi \cdot C = {\rm deg}(\xi|_C)={\rm deg}(M)\geq a_i{\rm deg}(\phi)=a_ip\geq p\,.$$ 
	Suppose that $\pi(C)$ is a point in $\mathbb{P}^1$.
	Then by Case 1 of Proposition \ref{general}, we have 
	$$\frac{L\cdot C}{m} = \frac{aq}{m} \ge a.$$
	Next suppose  that $\pi(C) = \mathbb{P}^1$. Then arguing as in Case 2 of
	Proposition \ref{general}, we conclude $p \ge m$. So $q \ge p \ge m$
	and $$\frac{L\cdot C}{m} = \frac{aq+bp}{m} \ge \frac{(a+b)p}{m} \ge
	a+b \ge a.$$
	So $\frac{L\cdot C}{m} \ge  a$ for all curves $C$
	passing through $x$. This gives $\varepsilon(X,L,x) \ge a$. 
	
	On the other hand, let $W= \pi^{-1}(\pi(x))$. Let $D
	\subset W \cong \mb P^{r-1}$ be a line containing  $x$. Then $D$ is
	smooth and $L\cdot D = a$. So $\varepsilon(X,L,x) \le a$. 
	
	We conclude that $\varepsilon(X,L,x)  = a$ for all $x
	\notin Z$, as required. 
\end{proof}

\begin{corollary}\label{cor-projective-bundle}
With the notation as in Theorem \ref{main-projective-bundle}, we have 
\begin{enumerate}
\item $\varepsilon(L,1) = a$. 
\item $\varepsilon(L) =  {\rm min}\{a,b\}\,$.
\end{enumerate}
\end{corollary}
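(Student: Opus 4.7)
The plan is to derive both statements directly from Theorem \ref{main-projective-bundle}. That theorem shows that the Seshadri function $x\mapsto\varepsilon(X,L,x)$ is piecewise constant with only two possible values: it equals $a$ on the open complement $X\setminus Z$ and equals $\min\{a,b\}$ on the closed locus $Z$. Since $\min\{a,b\}\le a$, taking the supremum over $X$ will yield $a$ (attained at any point of $X\setminus Z$), and taking the infimum will yield $\min\{a,b\}$ (attained at any point of $Z$). Parts (1) and (2) then follow immediately from the definitions $\varepsilon(X,L,1)=\sup_{x\in X}\varepsilon(L,x)$ and $\varepsilon(X,L)=\inf_{x\in X}\varepsilon(L,x)$ recalled in the introduction.

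The only point that needs to be checked is that both strata are nonempty under the running hypotheses of the section. Under the standing normalization $E=\mc O^s\oplus\bigoplus_{i=1}^r\mc O(a_i)$ with $0<a_1\le\cdots\le a_r$ and $E\not\cong\mc O^{\mathrm{rk}(E)}$, we have $r\ge 1$, and after tensoring by a suitable line bundle so that the smallest slope summand is $\mc O$ we also have $s\ge 1$. Consequently $Z\cong\mb P^1\times\mb P^{s-1}$ is a nonempty closed subvariety of dimension $s$, while $\dim X=r+s>s$, so $X\setminus Z$ is nonempty as well. There is no genuine obstacle in the argument; the corollary is a direct bookkeeping consequence of Theorem \ref{main-projective-bundle} together with this dimension count, so I would dispatch it in a line or two.
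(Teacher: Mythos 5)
Your proof is correct and matches the paper's (unstated but implicit) argument: the corollary is read off directly from Theorem \ref{main-projective-bundle} and the definitions of $\varepsilon(L,1)$ and $\varepsilon(L)$, exactly as the paper does for the analogous Corollary \ref{trivial-cor}. Your extra remark that both $Z$ and $X\setminus Z$ are nonempty under the normalization $s\ge 1$, $r\ge 1$ is a sensible detail the paper leaves implicit, and it is verified correctly.
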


\begin{remark}
	If $s=1$ in the above notation, Theorem \ref{main-projective-bundle} 
	follows directly from \cite[Theorem 3.3]{BHNN21}.  
	If $s=1$ then the rank of $E_d/E_{d-1}$ is 1. We take $m=d, r=1$ in the
	notation of \cite[Section 2.1]{BHNN21}. So the Grassmann bundle is nothing but the
	projective bundle $\mathbb{P}(E)$. Then the sub variety $Z =
	\mathbb{P}(E/E_{d-1}) \subset X$ defined above coincides with the
	section $\Gamma_s$ defined in \cite[Section 3]{BHNN21}. 
	The other hypotheses of
	\cite[Theorem 3.3]{BHNN21} can be verified easily to obtain Theorem \ref{main-projective-bundle}.  
\end{remark}

\section{Seshadri constants on $Gr(E,n)$}\label{sect4}
In this section, we compute Seshadri constants for ample line bundles on the Grassmann bundle over 
$C = \mb P^1$. For a vector bundle $E$ on $\mb P^1$ and a positive integer $n$, recall that 
$Gr(E,n)$ denotes the Grassmann bundle that parametrizes
the $n$-dimensional quotients of the 
fibers of $E$.

Given a vector bundle $E$ on $\mb P^1$, we tensor $E$ with a suitable line bundle and assume that 
$E = \mc{O}^{r_0} \oplus  \mc O(a_1)^{r_1}\oplus \ldots \oplus \mc{O}(a_m)^{r_m}$, with $0<a_1<\ldots< a_m$
and $r_i>0$ for all $0 \le i \le m$. Define $a_0=0$ and $r_{-1}=0$. 
The least possible
degree among all possible quotients of $E$ of rank $n$ is 
described as follows. Let $t$ be the smallest integer such that 
$$r_0+\ldots+r_t\geq n\,.$$
Define $l:=r_0+\ldots+r_{t-1}$; then $l<n$. Then the quotient of $E$ of 
least possible degree is isomorphic to 
$$\left(\bigoplus_{i=0}^{t-1}\mc O(a_i)^{r_i}\right)\oplus \mc O(a_t)^{n-l}\,.$$
Clearly, this has degree $d_0:=(n-l)a_t+\sum_{i=1}^{t-1}r_ia_i$.
Consider the Pl\"ucker 
embedding $i:Gr(E,n)\subset \mb P(\wedge^nE)$. 
Let $\pi:\mb P(\wedge^nE)\to \mb P^1$ denote the projection
and let $\pi':=\pi\circ i$. 
Then the above quotient defines a section $s$
of $\pi'$ and $i\circ s$ defines a section of $\pi$. 
One checks easily that the nef cone of $\mb P(\wedge^nE)$
has as boundaries $\mf f=\pi^*\mc O_{\mb P^1}(1)$ and 
$\xi=\mc O_{\mb P(\wedge^nE)}(1)\otimes \pi^*\mc O_{\mb P^1}(-d_0)$.
The latter line bundle is {strictly} nef as $(\wedge^nE)\otimes \mc O_{\mb P^1}(-d_0)$
is globally generated and has the trivial bundle as a quotient.
Using the section $s$ one easily checks that the nef cone
of $\mb P(\wedge^nE)$ maps onto the nef cone of $Gr(E,n)$.
See \cite{Biswas-Param} for a description of the Nef cones of flag varieties 
over any curve.

Let $Z:=Gr(\mc O(a_t)^{r_t},n-l)$. Let $\pi'':Z\to \mb P^1$
denote the canonical map. Then we have the tautological quotient 
$$q_Z:\pi''^*\mc O(a_t)^{r_t}\to F_Z$$
on $Z$.
Denote by $E'$ and 
$E''$ the following summands of $E$,
$$E'=\bigoplus_{i=0}^{t-1}\mc O(a_i)^{r_i} \qquad\text{and}\qquad
E''=\bigoplus_{i=t+1}^{l}\mc O(a_i)^{r_i}\,.$$
Thus, $E=E'\oplus \mc O(a_t)^{r_t}\oplus E''$. We get 
the following quotient on $Z$
\begin{align*}
	\pi''^*E=\pi''^*E'\oplus \pi''^*\mc O(a_t)^{r_t}\oplus \pi''^*E''\xrightarrow{Id\oplus q_Z\oplus 0}
		\pi''^*E'\oplus F_Z\oplus 0\,,
\end{align*}
which defines a map $Z\to Gr(E,n)$.

\begin{lemma}
	Let $C\subset Gr(E,n)$ be a curve such that 
	$\xi\cdot C=0$. Then this inclusion factors as
	$C\to Z\to Gr(E,n)$.
\end{lemma}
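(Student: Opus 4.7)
The plan is to pull back the tautological rank-$n$ quotient on $Gr(E,n)$ along $C \hookrightarrow Gr(E,n)$, giving a surjection of locally free sheaves $\phi_C^*E \twoheadrightarrow \mc F$ on $C$, where $\phi_C\colon C\to \mb P^1$ is the structure map. Under the Pl\"ucker embedding one has $\xi\cdot C = \deg \mc F - d_0\deg \phi_C$, so the hypothesis $\xi\cdot C = 0$ first forces $\phi_C$ to be non-constant (on a fiber of $\pi'$ the bundle $\xi$ restricts to the ample Pl\"ucker class) and then gives $\deg \mc F = ed_0$, where $e := \deg \phi_C > 0$. I would then pass to the normalization $\nu\colon \tilde C\to C$ and write $\tilde\phi = \phi_C\circ \nu$, $\tilde{\mc F} = \nu^*\mc F$; since $\nu$ is surjective and $C$ is reduced, vanishing and fibrewise rank of maps between locally free sheaves on $C$ can be detected after pullback to $\tilde C$.

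The heart of the proof is a slope computation on $\tilde C$ using the decomposition $E = E'\oplus \mc O(a_t)^{r_t}\oplus E''$. I would first show that $\tilde\phi^*E''\to \tilde{\mc F}$ vanishes. If the image $\mc G$ had rank $k>0$, then since each summand of $\tilde\phi^*E''$ is polystable of slope at least $ea_{t+1}$, one has $\deg \mc G \ge eka_{t+1}$; on the other hand, $\tilde{\mc F}/\mc G$ is a rank-$(n-k)$ quotient of $\tilde\phi^*E_t$, and because $ea_t$ is the top slope of $\tilde\phi^*E_t$ the maximum degree of a rank-$s$ subsheaf (with $s:=\mathrm{rk}(\tilde\phi^*E_t)-(n-k)$) is at most $esa_t$, yielding $\deg(\tilde{\mc F}/\mc G)\ge e(d_0-ka_t)$. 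Summing, $\deg \tilde{\mc F}\ge ed_0 + ek(a_{t+1}-a_t) > ed_0$ since $a_{t+1}>a_t$, contradicting $\deg \tilde{\mc F} = ed_0$. Hence $\tilde\phi^*E''\subset \tilde{\mc K}:=\ker(\tilde\phi^*E\twoheadrightarrow \tilde{\mc F})$, and $\tilde{\mc K}/\tilde\phi^*E''$ is a rank-$(r_t-(n-l))$ subsheaf of $\tilde\phi^*E_t$ of slope exactly $ea_t$. Its projection to the $\tilde\phi^*E'$ summand would land in a sheaf of slope $\le ea_{t-1}<ea_t$, while its kernel lies in the polystable $\tilde\phi^*\mc O(a_t)^{r_t}$ of slope $ea_t$; a short slope inequality forces this projection to vanish, giving $\tilde{\mc K} = \tilde\phi^*E''\oplus \mc K_t$ for some sub-bundle $\mc K_t \subset \tilde\phi^*\mc O(a_t)^{r_t}$ of rank $r_t-(n-l)$.

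Consequently the quotient $\tilde\phi^*E \twoheadrightarrow \tilde{\mc F}$ has exactly the form prescribed by the map $Z\to Gr(E,n)$, namely the identity on $E'$, the quotient by $\mc K_t$ on $\mc O(a_t)^{r_t}$, and zero on $E''$; these vanishing and injectivity statements descend to $C$, producing a rank-$(n-l)$ quotient $\phi_C^*\mc O(a_t)^{r_t}\twoheadrightarrow \mc F/\phi_C^*E'$. By the universal property of the relative Grassmannian $Z = Gr(\mc O(a_t)^{r_t},n-l)$ this defines a morphism $C\to Z$ whose composition with $Z\to Gr(E,n)$ equals the original inclusion, because the two resulting quotients of $\phi_C^*E$ share the same kernel. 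The main obstacle is the bookkeeping in the first slope inequality when $k$ is large enough that $s > r_t$ and the ``top-$s$'' part of the HN polygon of $\tilde\phi^*E_t$ extends past the $\mc O(a_t)^{r_t}$ summand into lower-slope parts, but the elementary observation that $ea_t$ is the maximum slope in $\tilde\phi^*E_t$ makes the coarse bound hold uniformly.
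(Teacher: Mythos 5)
Your argument is correct, but it takes a genuinely different route from the paper's. The paper works directly on the integral (possibly singular) curve $C$: from $\xi\cdot C=0$ it gets $\deg\det Q=d_0\deg\phi$, notes that $\phi^*(\wedge^nE)$ is a direct sum of line bundles each of degree at least $d_0\deg\phi$, and uses the fact that a line bundle cannot map nontrivially to a line bundle of smaller degree to produce a minimal-degree summand mapping isomorphically onto $\det Q$; from this it reads off $Q\cong\phi^*E'\oplus\phi^*\mc O(a_t)^{n-l}$, that $\phi^*E''$ is killed for degree reasons, and that after composing with the inverse of the resulting automorphism the quotient has block form $\mathrm{Id}\oplus q'\oplus 0$, with $q'$ furnishing the map to $Z$. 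You instead pass to the normalization and run $\mu_{\min}/\mu_{\max}$ slope estimates: your bound $\deg\mc G\ge eka_{t+1}$ is exactly $\mu_{\min}(\tilde\phi^*E'')=ea_{t+1}$ applied to the bundle quotient $\mc G$ (a subsheaf of the locally free $\tilde{\mc F}$ on the smooth $\tilde C$, so torsion causes no trouble), the coarse kernel bound via $\mu_{\max}(\tilde\phi^*(E'\oplus\mc O(a_t)^{r_t}))=ea_t$ does hold uniformly as you say, and the second slope inequality correctly forces $\tilde{\mc K}=\tilde\phi^*E''\oplus\mc K_t$ with $\mc K_t\subset\tilde\phi^*\mc O(a_t)^{r_t}$; matching kernels then gives the factorization through $Z$ by the universal property of the Grassmann bundle. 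What your route buys: it replaces the paper's terse ``from this we easily conclude'' summand-chasing inside $\wedge^n\phi^*E$ by systematic semistability inequalities, it makes explicit that $\phi_C$ cannot be constant (needed for the strict inequalities, and left implicit in the paper), and it is insensitive to singularities of $C$. The price is the descent step, where you should record the two routine facts that make ``check after pullback'' legitimate: $\mc F$ is locally free, so $0\to\mc K_C\to\phi_C^*E\to\mc F\to 0$ is locally split, hence $\mc K_C$ is locally free and $\nu^*\mc K_C=\tilde{\mc K}$; and a morphism of locally free sheaves on the reduced curve $C$ vanishing after pullback along the surjective $\nu$ vanishes, which handles both $\phi_C^*E''\to\mc F$ and the projection $\mc K_C\to\phi_C^*E'$, after which $\mc F\cong\phi_C^*E'\oplus\bigl(\phi_C^*\mc O(a_t)^{r_t}/\mc K_{t,C}\bigr)$ and the induced rank-$(n-l)$ quotient defines $C\to Z$ with the same kernel as the original embedding. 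With those details filled in, your proof is complete.
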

\begin{proof}
Let 
$\phi:C\to \mb P^1$ denote the projection to $\mb P^1$.
Let $q:\phi^*E\to Q$ denote the quotient which defines 
the map $C\to Gr(E,n)$.
Note that if $a:Q\to Q$ is an isomorphism, then the quotients
$q$ and $a\circ q$ define the same maps from $C\to Gr(E,n)$.
We will find an $a$ for which it is clear that the map 
defined by $a\circ q$ factors through $Z$. 

Since $\xi\cdot C=0$ it is clear that 
${\rm deg}({\rm det}(Q))=d_0{\rm deg}(\phi)$.
The bundle $\phi^*(\wedge^nE)$ is a direct sum of line bundles,  
each of which has degree at least $d_0{\rm deg}(\phi)$.
Since a line bundle cannot map to
a line bundle of strictly lower degree, it follows
that there is a line bundle summand of $\phi^*(\wedge^nE)$ 
of degree $d_0{\rm deg}(\phi)$
which maps isomorphically onto ${\rm det}(Q)$.
From this we easily conclude that $Q\cong \phi^* E'\oplus \phi^*\mc O(a_t)^{n-l}$
and that there is a summand of 
$\phi^*E$ 
such that the composite
$$\phi^* E'\oplus \phi^*\mc O(a_t)^{n-l}\subset \phi^* E \xrightarrow{q} \phi^* E'\oplus \phi^*\mc O(a_t)^{n-l}$$
is an isomorphism. 
Denote the above isomorphism by $a$. By looking at degree
it is clear that the summand $\phi^*E''\subset \phi^*E$
maps to 0 under $q$. Consider the 
map $a^{-1}\circ q$. It follows that there is a quotient 
$$q':\phi^*\mc O(a_t)^{r_t}\to \phi^*\mc O(a_t)^{n-l}$$
such that $a^{-1}\circ q$ is of the type

\begin{align*}
	\phi^*E=\phi^*E'\oplus \phi^*\mc O(a_t)^{r_t}\oplus \phi^*E''\xrightarrow{Id\oplus q'\oplus 0}
		\phi^* E'\oplus \phi^*\mc O(a_t)^{n-l}\oplus 0\,.
\end{align*}

The quotient $q'$ defines a map $C \to Z$ and it is 
clear that the map $C\to Gr(E,n)$ factors as $C \to Z\to Gr(E,n)$.
\end{proof}

\begin{theorem}\label{main-grassmann}
	Let $L$ be an ample line bundle on $Gr(E,n)$ which is numerically
	equivalent to the bundle $a\xi+b\mf f$, where $a,b$ are positive
	integers. Let $x \in Gr(E,n)$. We have
	\begin{equation*}
		\varepsilon(L,x) \,=\, 
		\begin{cases}
			a, & \text{~if~}  x \notin Z, \\
			{\rm min}\{a,b\}, & \text{~if~} x \in Z.
		\end{cases}
	\end{equation*}
\end{theorem}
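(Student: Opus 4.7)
The plan is to closely mirror the argument given for Theorem \ref{main-projective-bundle}, splitting into the two cases $x \in Z$ and $x \notin Z$ and establishing matching upper and lower bounds in each.

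As a preliminary step, I would establish the general lower bound $\varepsilon(L,x) \ge \min\{a,b\}$ for every $x \in Gr(E,n)$, following the two-case analysis of Proposition \ref{general}. Let $C \ni x$ be irreducible and reduced with $m = {\rm mult}_x C$. If $\pi'(C)$ is a point, then $C$ lies in a fiber of $\pi'$, which is an ordinary Grassmannian where $\xi$ restricts to the Pl\"ucker $\mc O(1)$; a general Pl\"ucker hyperplane through $x$ not containing $C$ yields $\xi \cdot C \ge m$ by B\'ezout, while $\mf f \cdot C = 0$, so $L \cdot C \ge am$. If instead $\pi'(C) = \mb P^1$, then $C$ is not contained in the fiber $W$ through $x$, so $\mf f \cdot C = W \cdot C \ge m$, and nefness of $\xi$ gives $L \cdot C \ge bm$.

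For $x \in Z$, I would identify $Z \cong \mb P^1 \times Gr(r_t, n-l)$ and compute the restrictions $\mf f|_Z = p_1^*\mc O_{\mb P^1}(1)$ and $\xi|_Z = p_2^*\mc O_{Gr(r_t, n-l)}(1)$. The latter follows because the determinant of the tautological quotient $\pi''^*E' \oplus F_Z$ on $Z$ equals $p_1^*\mc O(d_0) \otimes p_2^*\mc O_{Gr}(1)$, which cancels precisely against the twist $\pi^*\mc O(-d_0)$ in $\xi = \mc O_{Gr(E,n)}(1) \otimes \pi^*\mc O(-d_0)$. Thus $L|_Z = a\,p_2^*\mc O_{Gr}(1) + b\,p_1^*\mc O_{\mb P^1}(1)$, and the curves $\mb P^1 \times \{z\}$ (ratio $b$) and $\{y\} \times \ell$ with $\ell$ a line in $Gr(r_t, n-l)$ through $z$ (ratio $a$) give $\varepsilon(Z, L|_Z, x) \le \min\{a,b\}$. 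Combined with the preliminary lower bound, this yields $\varepsilon(L, x) = \min\{a,b\}$.

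For $x \notin Z$, the upper bound $\varepsilon(L, x) \le a$ comes from a line in the fiber of $\pi'$ through $x$ (such a line exists since the fiber is a Grassmannian). For the matching lower bound, the key new input is that $\xi$ is globally generated: $\pi'_* \xi \cong \wedge^n E \otimes \mc O_{\mb P^1}(-d_0)$ is a direct sum of line bundles of non-negative degree on $\mb P^1$, hence globally generated, and consequently so is $\xi$ on $Gr(E, n)$. For any irreducible curve $C \ni x$, the hypothesis $x \notin Z$ forces $C \not\subset Z$, so by the preceding lemma $\xi \cdot C > 0$. Hence the morphism defined by $|\xi|$ does not contract $C$, and a general section of $\xi$ vanishing at $x$ does not contain $C$; B\'ezout then gives $\xi \cdot C \ge m$, whence $L \cdot C \ge am$.

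The main obstacle is the careful bookkeeping required to identify $\xi|_Z$ with the pullback of the Pl\"ucker line bundle from $Gr(r_t, n-l)$; once this calculation is in place, the remainder of the argument is essentially a direct transcription of the strategy from the projective bundle case, with the preceding lemma on curves with $\xi \cdot C = 0$ replacing the role played by $E_{d-1}$ in the proof of Theorem \ref{main-projective-bundle}.
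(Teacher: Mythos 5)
Your proof is correct and follows essentially the same route as the paper: the key ingredients (global generation of $\xi$, the lemma that curves with $\xi\cdot C=0$ lie in $Z$, B\'ezout, and the identification $\xi|_Z\cong p_2^*\mc O_{{\rm Gr}(r_t,n-l)}(1)$ on $Z\cong\mb P^1\times {\rm Gr}(r_t,n-l)$) are exactly those used there, the only cosmetic difference being that you obtain the lower bound $\min\{a,b\}$ by re-running the two-case argument of Proposition \ref{general} directly on $Gr(E,n)$, whereas the paper gets it by restricting from the Pl\"ucker ambient space $\mb P(\wedge^nE)$ via Theorem \ref{main-projective-bundle}. One trivial caveat: when $n-l=r_t$ the factor ${\rm Gr}(r_t,n-l)$ is a point and contains no line, so in that case the ratio-$a$ curve through $x\in Z$ should be taken to be a line in the fibre of $\pi'$ (as you already do for $x\notin Z$) rather than a curve inside $Z$.
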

\begin{proof}
If $x\notin Z$ then for each curve
$C$ through $x$ we have $\xi\cdot C>0$. Since $\xi$
is a globally generated line bundle we have a map
$\psi:Gr(E,n)\to \mb PH^0(Gr(E,n),\xi)$. Let $C$ be a curve
through $x$. Then  since $\xi\cdot C>0$ it follows
that $\psi(C)$ is a curve passing through $\psi(x)$.
There is a hyperplane through $\psi(x)$ such that the 
intersection with $\psi(C)$ is proper. Thus, there 
is a global section of $\xi$ whose vanishing locus
is a divisor $H$ through $x$ and its intersection with 
$C$ is proper. This shows, using B\'ezout's theorem, that 
$$\xi\cdot C \geq {\rm mult}_x(H){\rm mult}_x(C)\geq {\rm mult}_x(C)\,.$$
Thus,
$$\frac{(a\xi+b\mf f)\cdot C}{{\rm mult}_x(C)}\geq a$$
for all curves $C$ passing through $x$. By taking 
$C$ to be a line in the fiber of $\pi'$ we see that 
the ratio $a$ is attained. This shows that 
$$\varepsilon(Gr(E,n),a\xi+b\mf f,x)= a\,.$$

Next consider the case when $x\in Z$. Note 
that $Z\cong \mb P^1 \times {\rm Gr}(r_t,n-l)$. 
It is easily checked that the restriction of $\xi$
to $Z$ is $\mc O_{{\rm Gr}(r_t,n-l)}(1)$.
Thus, through every point of $Z$ there is a section
$s$ of $\pi'$ such that $\xi\cdot s(\mb P^1)=0$ 
and $\mf f\cdot s(\mb P^1)=1$. Thus, the ratio
$$\frac{(a\xi+b\mf f)\cdot s(\mb P^1)}{{\rm mult}_x(s(\mb P^1))}=b$$
is attained. By Theorem \ref{main-projective-bundle}
$$\varepsilon(Gr(E,n),a\xi+b\mf f,x)\geq \varepsilon(\mb P(\wedge^nE),a\xi+b\mf f,x) \geq {\rm min}\{a,b\}\,.$$
This shows that $\varepsilon(Gr(E,n),a\xi+b\mf f,x)= b$.
This completes the proof of the theorem.
\end{proof}

\begin{corollary}\label{cor-grassmann}
With the notation as in Theorem \ref{main-grassmann}, we have 
\begin{enumerate}
\item $\varepsilon(L,1) = a$. 
\item $\varepsilon(L) =  {\rm min}\{a,b\}\,$.
\end{enumerate}
\end{corollary}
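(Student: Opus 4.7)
The plan is to read off both equalities directly from the pointwise values of $\varepsilon(L,x)$ computed in Theorem \ref{main-grassmann}. That theorem partitions $Gr(E,n)$ into two strata on which the Seshadri constant is locally constant: the value is $a$ at every $x \notin Z$, and the value is $\min\{a,b\}$ at every $x \in Z$.

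First I would verify that both strata are nonempty. The subvariety $Z \cong \mathbb{P}^1 \times \mathrm{Gr}(r_t, n-l)$ has dimension strictly less than $\dim Gr(E,n)$ (unless $E$ is a direct sum of copies of a single line bundle, in which case there is nothing to separate and the second case subsumes the first), so $Gr(E,n) \setminus Z$ is a nonempty open set, and both values $a$ and $\min\{a,b\}$ are attained by $x \mapsto \varepsilon(L,x)$.

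For part (1), I would use that since $\min\{a,b\} \le a$, the supremum of the two attained values is exactly $a$, giving $\varepsilon(L,1) = \sup_{x \in Gr(E,n)} \varepsilon(L,x) = a$. For part (2), the infimum of the values $a$ and $\min\{a,b\}$ is $\min\{a,b\}$, giving $\varepsilon(L) = \inf_{x \in Gr(E,n)} \varepsilon(L,x) = \min\{a,b\}$.

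There is no real obstacle here: once Theorem \ref{main-grassmann} has been proved and the nonemptiness of both strata has been noted, the corollary is an immediate consequence of the definitions of $\varepsilon(L,1)$ and $\varepsilon(L)$, exactly parallel to the way Corollary \ref{trivial-cor} follows from Theorem \ref{trivial-main} and Corollary \ref{general-cor} follows from Theorem \ref{general-main}.
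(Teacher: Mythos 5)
Your argument is correct and is exactly the paper's (implicit) proof: the corollary is stated without proof because, as with Corollaries \ref{trivial-cor} and \ref{general-cor}, it follows immediately from the pointwise values in Theorem \ref{main-grassmann} together with the definitions of $\varepsilon(L,1)$ and $\varepsilon(L)$, and your only added content—that $Z$ is nonempty while $Gr(E,n)\setminus Z$ is nonempty since $\dim Z<\dim Gr(E,n)$ (outside the degenerate case of a twist of a trivial bundle, which the paper tacitly excludes, as in Section \ref{sect3})—is the right observation to make the sup and inf computations legitimate.
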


\begin{remark}
	Seshadri constants on Grassmann bundles $Gr(E,n)$ over \textit{arbitrary} smooth curves are studied in \cite{BHNN21}. However, 
	they only consider Grassmann bundles corresponding to rank $n$ quotients for certain specific values of $n$, which are 
	determined by the Harder-Narasimhan filtration of $E$. In Theorem \ref{main-grassmann}, we do not impose any conditions 
	on $n$ and determine Seshadri constants for any line bundle on a Grassmann bundle over $\mb P^1$. 
\end{remark}

\bibliographystyle{halpha}
\bibliography{./sc}

\begin{thebibliography}{BHNN20}
\expandafter\ifx\csname url\endcsname\relax
  \def\url#1{\texttt{#1}}\fi
\expandafter\ifx\csname doi\endcsname\relax
  \def\doi#1{\burlalt{doi:#1}{http://dx.doi.org/#1}}\fi
\expandafter\ifx\csname urlprefix\endcsname\relax\def\urlprefix{URL }\fi
\expandafter\ifx\csname href\endcsname\relax
  \def\href#1#2{#2}\fi
\expandafter\ifx\csname burlalt\endcsname\relax
  \def\burlalt#1#2{\href{#2}{#1}}\fi

\bibitem[Bau98]{B98}
Thomas Bauer.
\newblock Seshadri constants and periods of polarized abelian varieties.
\newblock {\em Math. Ann.}, 312(4):607--623, 1998.
\newblock \doi{10.1007/s002080050238}.
\newblock With an appendix by the author and Tomasz Szemberg.

\bibitem[BHNN20]{BHNN21}
Indranil Biswas, Krishna Hanumanthu, Donihakkalu~Shankar Nagaraj, and Peter~E.
  Newstead.
\newblock Seshadri constants and grassmann bundles over curves.
\newblock {\em Annales of the Fourier Institute}, 70(4):1477--1496, 2020.
\newblock \doi{10.5802 / aif.3370}.

\bibitem[BP14]{Biswas-Param}
Indranil Biswas and A.~J. Parameswaran.
\newblock Nef cone of flag bundles over a curve.
\newblock {\em Kyoto J. Math.}, 54(2):353--366, 2014.
\newblock \doi{10.1215/21562261-2642422}.

\bibitem[BS09]{BS09}
Thomas Bauer and Tomasz Szemberg.
\newblock Seshadri constants and the generation of jets.
\newblock {\em J. Pure Appl. Algebra}, 213(11):2134--2140, 2009.
\newblock \doi{10.1016/j.jpaa.2009.03.005}.

\bibitem[Dem92]{D90}
Jean-Pierre Demailly.
\newblock Singular {H}ermitian metrics on positive line bundles.
\newblock In {\em Complex algebraic varieties ({B}ayreuth, 1990)}, volume 1507
  of {\em Lecture Notes in Math.}, pages 87--104. Springer, Berlin, 1992.
\newblock \doi{10.1007/BFb0094512}.

\bibitem[EL93]{EL93}
Lawrence Ein and Robert Lazarsfeld.
\newblock Seshadri constants on smooth surfaces.
\newblock Number 218, pages 177--186. 1993.
\newblock Journ\'{e}es de G\'{e}om\'{e}trie Alg\'{e}brique d'Orsay (Orsay,
  1992).

\bibitem[Ful11]{F11}
Mihai Fulger.
\newblock The cones of effective cycles on projective bundles over curves.
\newblock {\em Math. Z.}, 269(1-2):449--459, 2011.
\newblock \doi{10.1007/s00209-010-0744-z}.

\bibitem[GS19]{GS19}
Chandranandan Gangopadhyay and Ronnie Sebastian.
\newblock Fundamental group schemes of some {Q}uot schemes on a smooth
  projective curve.
\newblock {\em Journal of Algebra}, 562:290--305, 2019,
  \burlalt{arXiv:1909.10775}{http://arxiv.org/abs/arXiv:1909.10775}.
\newblock \doi{10.1016/j.jalgebra.2020.06.025}.

\bibitem[GS20]{GS20}
Chandranandan Gangopadhyay and Ronnie Sebastian.
\newblock Nef cones of some quot schemes on a smooth projective curve, 2020,
  \burlalt{arXiv:2006.16666}{http://arxiv.org/abs/arXiv:2006.16666}.

\bibitem[Har70]{H70}
Robin Hartshorne.
\newblock {\em Ample subvarieties of algebraic varieties}.
\newblock Lecture Notes in Mathematics, Vol. 156. Springer-Verlag, Berlin-New
  York, 1970.
\newblock Notes written in collaboration with C. Musili.

\bibitem[Ito14]{I14}
Atsushi Ito.
\newblock Seshadri constants via toric degenerations.
\newblock {\em J. Reine Angew. Math.}, 695:151--174, 2014.
\newblock \doi{10.1515/crelle-2012-0116}.

\bibitem[Laz96]{Laz96}
Robert Lazarsfeld.
\newblock Lengths of periods and {S}eshadri constants of abelian varieties.
\newblock {\em Math. Res. Lett.}, 3(4):439--447, 1996.
\newblock \doi{10.4310/MRL.1996.v3.n4.a1}.

\bibitem[Laz04]{Laz}
Robert Lazarsfeld.
\newblock {\em Positivity in algebraic geometry. {I}}, volume~48 of {\em
  Ergebnisse der Mathematik und ihrer Grenzgebiete. 3. Folge. A Series of
  Modern Surveys in Mathematics [Results in Mathematics and Related Areas. 3rd
  Series. A Series of Modern Surveys in Mathematics]}.
\newblock Springer-Verlag, Berlin, 2004.
\newblock \doi{10.1007/978-3-642-18808-4}.
\newblock Classical setting: line bundles and linear series.

\bibitem[Lee03]{L03}
Seunghun Lee.
\newblock Seshadri constants and {F}ano manifolds.
\newblock {\em Math. Z.}, 245(4):645--656, 2003.
\newblock \doi{10.1007/s00209-003-0561-8}.

\bibitem[LZ18]{LZ18}
Yuchen Liu and Ziquan Zhuang.
\newblock Characterization of projective spaces by {S}eshadri constants.
\newblock {\em Math. Z.}, 289(1-2):25--38, 2018.
\newblock \doi{10.1007/s00209-017-1941-9}.

\bibitem[Miy87]{M85}
Yoichi Miyaoka.
\newblock The {C}hern classes and {K}odaira dimension of a minimal variety.
\newblock In {\em Algebraic geometry, {S}endai, 1985}, volume~10 of {\em Adv.
  Stud. Pure Math.}, pages 449--476. North-Holland, Amsterdam, 1987.
\newblock \doi{10.2969/aspm/01010449}.

\bibitem[MR15]{MR15}
David McKinnon and Mike Roth.
\newblock Seshadri constants, diophantine approximation, and {R}oth's theorem
  for arbitrary varieties.
\newblock {\em Invent. Math.}, 200(2):513--583, 2015.
\newblock \doi{10.1007/s00222-014-0540-1}.

\bibitem[Nak96]{N96}
Michael Nakamaye.
\newblock Seshadri constants on abelian varieties.
\newblock {\em Amer. J. Math.}, 118(3):621--635, 1996.
\newblock
  \urlprefix\url{http://muse.jhu.edu/journals/american_journal_of_mathematics/v118/118.3nakamaye.pdf}.

\bibitem[Str87]{Str}
Stein~Arild Stromme.
\newblock On parametrized rational curves in {G}rassmann varieties.
\newblock In {\em Space curves ({R}occa di {P}apa, 1985)}, volume 1266 of {\em
  Lecture Notes in Math.}, pages 251--272. Springer, Berlin, 1987.
\newblock \doi{10.1007/BFb0078187}.

\end{thebibliography}

\end{document}